\newcommand\barrow{\textstyle\mathop{\rightarrow}_{}^{\hspace{-8pt}\bullet}}
\newcommand\arrowb{\textstyle\mathop{\rightarrow}_{\hspace{-8pt}\bullet}^{}}
\newcommand\carrow{\textstyle\mathop{\rightarrow}_{}^{\hspace{-8pt}\circ}}
\newcommand\arrowc{\textstyle\mathop{\rightarrow}_{\hspace{-8pt}\circ}^{}}
\newcommand\carrowc{\textstyle\mathop{\rightarrow}_{\hspace{-8pt}\circ}^{\hspace{-8pt}\circ}}
\newcommand\carrowb{\textstyle\mathop{\rightarrow}_{\hspace{-8pt}\bullet}^{\hspace{-8pt}\circ}}
\newtheorem{thm}{Theorem}
\newtheorem{prp}{Proposition}
\newtheorem{cor}{Corollary}
\theoremstyle{definition}
\newtheorem{dfn}{Definition}
\newtheorem{qst}{Question}
\numberwithin{equation}{section}
\author{Tristan Bice}
\address{Federal University of Bahia\\
Salvador\\
Brazil}
\email{Tristan.Bice@gmail.com}
\thanks{This research has been supported by an IMPA (Brazil) postdoctoral fellowship.}
\keywords{distance, hemimetric, quasimetric, order, topology, complete}
\subjclass[2010]{06A06, 18A35, 54E50, 54E55}
\title{Yoneda Completeness}
\begin{document}

\begin{abstract}
We characterize Yoneda completeness for non-symmetric distances by combinations of metric and directed completeness.  One of these generalizes the Kostanek-Waszkiewicz theorem on formal balls.
\end{abstract}

\maketitle

\section*{Motivation}

Yoneda completeness was introduced in \cite{Wagner1997} and \cite{Bonsangue1998} to unify metric and order theoretic notions of completeness.  More precisely, the goal was to find a natural notion of completeness for non-symmetric distances that reduces to Cauchy completeness in the metric case and directed completeness in the partial order case.  We aim to take this further by showing that, even in more general distance spaces, Yoneda completeness can still be characterized by several different combinations of metric and directed completeness.

We draw our inspiration from a perhaps surprising source, namely C*-algebra semicontinuity theory (see \cite{AkemannPedersen1973} and \cite{Brown1988}).  Various order relations in C*-algebras can be composed with the metric to form non-symmetric distances, although they are never mentioned explicitly in the C*-algebra literature.  This is unfortunate, as non-symmetric distances could simplify and generalize certain aspects of C*-algebra theory.  In particular this rings true for C*-algebra semicontinuity theory, where some sophisticated C*-algebraic machinery can be replaced by the elementary net manipulations that we describe here.  This will also no doubt have applications to distance spaces that arise in other areas of algebra and analysis.

\section*{Outline}

In \autoref{P} we give the basic the definitions and theory of (pre-)Cauchy nets, ball and hole topologies, non-symmetric distances and supremums.  We take \cite{Wagner1997} and \cite{Bonsangue1998} as our primary references although our approach is slightly more general, e.g. we deal with distances rather than hemimetrics and nets rather than sequences.  Although to keep things simple, the range of our distance functions will always be the positive extended real line $[0,\infty]$ as in \cite{Bonsangue1998}, rather than the more general quantales considered in \cite{Wagner1997}.  For the completeness notions we consider, see \autoref{YC} and \autoref{edcomplete} respectively.

In \autoref{Completeness}, we construct several closely related sequences and subsets from a given Cauchy net $(x_\lambda)$.  Their consequences regarding completeness are collected at the end in \autoref{Yc}.  We finish with a simple application to formal balls in \autoref{KW}, showing that \autoref{Yc} \eqref{Yc1a} generalizes the Kostanek-Waszkiewicz theorem.

\section{Preliminaries}\label{P}

We make the following standing assumption.
\[\textbf{$\mathbf{d}$ and $\mathbf{e}$ are functions from $X\times X$ to $[0,\infty]$.}\]

\subsection{Nets}
The nets $(x_\lambda)\subseteq X$ we will be concerned with are defined as follows.
\begin{align}
\lim_\gamma\liminf_\delta\mathbf{d}(x_\gamma,x_\delta)=0\quad&\Leftrightarrow\quad(x_\lambda)\text{ is \emph{$\mathbf{d}$-reflexive}}.\\
\lim_\gamma\limsup_\delta\mathbf{d}(x_\gamma,x_\delta)=0\quad&\Leftrightarrow\quad(x_\lambda)\text{ is \emph{$\mathbf{d}$-pre-Cauchy}}.\\
\label{Cauchy}\lim_{\gamma\prec\delta}\mathbf{d}(x_\gamma,x_\delta)=0\quad&\Leftrightarrow\quad(x_\lambda)\text{ is \emph{$\mathbf{d}$-Cauchy}}.
\end{align}
Just to be clear, by a net we mean a set indexed by a directed set $\Lambda$, i.e. there is a (possibly non-reflexive) transitive relation $\mathbin{\prec}\subseteq\Lambda\times\Lambda$ satisfying $\forall\gamma,\delta\ \exists\lambda\ (\gamma,\delta\prec\lambda)$, with limits inferior and superior defined by
\begin{align*}
\liminf_\lambda r_\lambda&=\lim_\gamma\inf_{\gamma\prec\lambda}r_\lambda.\\
\limsup_\lambda r_\lambda&=\lim_\gamma\sup_{\gamma\prec\lambda}r_\lambda.
\end{align*}
And in \eqref{Cauchy} we consider $\prec$ itself as a directed subset of $\Lambda\times\Lambda$ ordered by $\prec\times\prec$.

The above nets can also be characterized by a filter $\Phi^\mathbf{d}\subseteq\mathscr{P}(X\times X)$ defined from $\mathbf{d}$.  Specifically, for $\mathbin{\prec}\subseteq[0,\infty]\times[0,\infty]$ and $\epsilon\in[0,\infty]$, define $\mathbin{\prec^\mathbf{d}_\epsilon}\subseteq X\times X$ by
\[x\prec^\mathbf{d}_\epsilon y\quad\Leftrightarrow\quad\mathbf{d}(x,y)\prec\epsilon.\]
Taking the usual $<$ on $[0,\infty]$ for $\prec$, we define
\[\Phi^\mathbf{d}=\{\mathbin{\preceq}:\epsilon>0\text{ and }\mathbin{<^\mathbf{d}_\epsilon}\,\subseteq\,\mathbin{\preceq}\,\subseteq\, X\times X\}.\]
So $\mathbin{\leq^\mathbf{d}}=\mathbin{\leq^\mathbf{d}_0}=\bigcap\Phi^\mathbf{d}$ and
\begin{align*}
\forall\mathbin{\preceq}\in\Phi^\mathbf{d}\ \exists\alpha\ \forall\gamma\succ\alpha\ \forall\beta\ \exists\delta\succ\beta\ (x_\gamma\preceq x_\delta)\quad&\Leftrightarrow\quad(x_\lambda)\text{ is \emph{$\mathbf{d}$-reflexive}}.\\
\forall\mathbin{\preceq}\in\Phi^\mathbf{d}\ \exists\alpha\ \forall\gamma\succ\alpha\ \exists\beta\ \forall\delta\succ\beta\ (x_\gamma\preceq x_\delta)\quad&\Leftrightarrow\quad(x_\lambda)\text{ is \emph{$\mathbf{d}$-pre-Cauchy}}.\\
\forall\mathbin{\preceq}\in\Phi^\mathbf{d}\ \exists\alpha\ \forall\gamma\succ\alpha\ \hspace{16pt}\forall\delta\succ\gamma\ (x_\gamma\preceq x_\delta)\quad&\Leftrightarrow\quad(x_\lambda)\text{ is \emph{$\mathbf{d}$-Cauchy}}.
\end{align*}

We immediately see that
\begin{center}
$\mathbf{d}$-Cauchy$\quad\Rightarrow\quad\mathbf{d}$-pre-Cauchy$\quad\Rightarrow\quad\mathbf{d}$-reflexive.
\end{center}
Denote the finite subsets of $\Lambda$ by $[\Lambda]^{<\omega}$, i.e. with $|F|$ denoting $F$'s cardinality,
\[[\Lambda]^{<\omega}\ =\ \{F\subseteq\Lambda:|F|<\omega\}.\]

\begin{prp}\label{preCauchysub}
Any $\mathbf{d}$-pre-Cauchy net $(x_\lambda)\subseteq X$ has a $\mathbf{d}$-Cauchy subnet.
\end{prp}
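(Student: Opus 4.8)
The plan is to manufacture the subnet by hand on an auxiliary directed set whose order is rigged to ``jump past'' the stretches on which the pre-Cauchy condition is only eventual rather than uniform; the whole difference between pre-Cauchy and Cauchy is exactly that in the former the threshold $\beta$ may lie far beyond $\gamma$, leaving bad points $\gamma\prec\delta\prec\beta$ in between, and the subnet will be chosen to skip these.

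First I would unwind the hypothesis via the filter characterization with $\mathbin{\preceq}=\mathbin{<^\mathbf{d}_{1/n}}$: for each positive integer $n$ there is $\alpha_n$ such that every $\gamma\succ\alpha_n$ admits a $\beta$ with $\mathbf{d}(x_\gamma,x_\delta)<1/n$ for all $\delta\succ\beta$. Using directedness of $\Lambda$ I would, for each such $n$ and $\gamma$, fix a single $\beta_n(\gamma)\succ\gamma$ still satisfying $\mathbf{d}(x_\gamma,x_\delta)<1/n$ whenever $\delta\succ\beta_n(\gamma)$ (enlarging a threshold only helps, by transitivity of $\prec$). The extra demand $\beta_n(\gamma)\succ\gamma$ looks innocuous but is what makes the construction work.

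Next I would take $M=\{(\gamma,n):\gamma\succ\alpha_n\}$ with $(\gamma,m)\prec_M(\delta,n)$ iff $m<n$ and $\delta\succ\beta_m(\gamma)$, and set $\phi(\gamma,n)=\gamma$. The two points needing proof are that $\prec_M$ is transitive and $M$ is directed. Transitivity is the crux: given $(\gamma,m)\prec_M(\delta,n)\prec_M(\eta,p)$ I get $\eta\succ\beta_n(\delta)\succ\delta\succ\beta_m(\gamma)$, where the middle step is precisely the arranged inequality $\beta_n(\delta)\succ\delta$, so $\eta\succ\beta_m(\gamma)$ and $m<p$. Directedness is then routine: given two points, move to a level $p$ above both indices and invoke directedness of $\Lambda$ to find $\eta$ lying above the two relevant thresholds and above $\alpha_p$.

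Finally I would check that $\phi$ is cofinal (indeed monotone), so $(x_{\phi(\mu)})_{\mu\in M}$ is a genuine subnet, and that it is $\mathbf{d}$-Cauchy. Cofinality holds because $(\gamma,n)\succ_M(\gamma_0,n_0)$ forces $\gamma\succ\beta_{n_0}(\gamma_0)\succ\gamma_0$, so a basepoint with $\gamma_0\succ\lambda$ dominates any prescribed $\lambda$. For the Cauchy condition, given $\epsilon>0$ I pick $m_0$ with $1/m_0\le\epsilon$ and any $(\gamma_0,m_0)\in M$; then for $(\gamma,m)\succ_M(\gamma_0,m_0)$ and $(\delta,n)\succ_M(\gamma,m)$ one has $m>m_0$ and $\delta\succ\beta_m(\gamma)$, whence $\mathbf{d}(x_\gamma,x_\delta)<1/m<1/m_0\le\epsilon$. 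I expect the only delicate step to be the transitivity of $\prec_M$, which is exactly why the thresholds were forced to dominate their own base indices; the rest is bookkeeping with the directedness of $\Lambda$.
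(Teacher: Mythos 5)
Your proof is correct, but it takes a genuinely different route from the paper's. The paper indexes its Cauchy subnet by the finite subsets $[\Lambda]^{<\omega}$ ordered by strict inclusion and defines the subnet map $f$ by recursion: $f(F)$ is chosen above all $f(E)$ with $E\subsetneqq F$ and so that $\mathbf{d}(x_{f(E)},x_{f(F)})\leq\limsup_\lambda\mathbf{d}(x_{f(E)},x_\lambda)+2^{-|F|}$, the Cauchy property following because the limits superior tend to $0$ (pre-Cauchyness) while the error terms $2^{-|F|}$ vanish. By contrast, you make no recursive choices at all: you fix the thresholds $\alpha_n$ and $\beta_n(\gamma)$ independently and wire the ``skipping'' into the order $\prec_M$ itself, with the second coordinate carrying the precision. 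Your key steps check out: $\mathbin{<^\mathbf{d}_{1/n}}\in\Phi^\mathbf{d}$, so the filter characterization of pre-Cauchyness gives exactly your $\alpha_n$ and $\beta$; the enlargement to $\beta_n(\gamma)\succ\gamma$ is legitimate by directedness and transitivity of $\prec$; transitivity of $\prec_M$ does hinge on $\beta_n(\delta)\succ\delta$, and $\beta_n(\delta)$ is defined precisely because $(\delta,n)\in M$ forces $\delta\succ\alpha_n$; and monotonicity plus your cofinality argument make $\phi$ a subnet map in the same (Kelley) sense in which the paper's $f$ is one. The trade-offs: your construction needs no case distinction for finite $\Lambda$, whereas the paper must treat that case separately (for finite $\Lambda$ the relation $\subsetneqq$ on $[\Lambda]^{<\omega}$ is not directed); on the other hand, the paper's $[\Lambda]^{<\omega}$ device is not ad hoc --- essentially the same recursion reappears in the proofs of \autoref{Cauchytodirected} and \autoref{CDS}, so it buys uniformity of technique across the paper, and it also retains quantitative control of $\mathbf{d}(x_{f(E)},x_{f(F)})$ that your purely order-theoretic indexing does not track.
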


\begin{proof}
If $\Lambda$ is finite then it has a maximum $\gamma$, which means the single element net $x_\gamma$ is a $\mathbf{d}$-Cauchy subnet.  Otherwise, define a map $f:[\Lambda]^{<\omega}\rightarrow\Lambda$ recursively as follows.  Let $f(\{\lambda\})=\lambda$, for all $\lambda\in\Lambda$.  As $(x_\lambda)$ is $\mathbf{d}$-pre-Cauchy, given any other $F\in[\Lambda]^{<\omega}$ we can take $f(F)\in\Lambda$ such that, for all $E\subsetneqq F$, $f(E)\prec f(F)$ and
\[\mathbf{d}(x_{f(E)},x_{f(F)})\leq\underset{\lambda}{\lim\sup}\,\mathbf{d}(x_{f(E)},x_\lambda)+2^{-|F|}.\]
Then $(x_{f(F)})$ is a $\mathbf{d}$-Cauchy subnet of $(x_\lambda)$ w.r.t. $\subsetneqq$ on $[X]^{<\omega}$.
\end{proof}

When $\mathbf{d}$ is a metric, there is usually only one type of net of interest as
\begin{align*}
\text{$\mathbf{d}$-Cauchy}\quad&\Leftrightarrow\quad\text{$\mathbf{d}$-pre-Cauchy}.\\
\text{$\mathbf{d}$-reflexive}\quad&\Leftrightarrow\quad\text{arbitrary},\quad\text{if $X$ is totally bounded}.
\end{align*}
On the other hand, for any partial order $\mathbin{\preceq}\subseteq X\times X$,
\[\text{$\preceq$-Cauchy}\quad\Leftrightarrow\quad\text{eventually increasing},\]
when we identify $\preceq$ with its characteristic function (as we do from now on)
\[\preceq(x,y)=\begin{cases}0&\text{if }x\preceq y\\ \infty&\text{otherwise}\end{cases}\]
(e.g. $\leq^\mathbf{d}$ is identified with $\infty\mathbf{d}$, taking $\infty0=0$).
In this case there are simple examples of non-Cauchy pre-Cauchy sequences \textendash\, see \cite{Wagner1997} Remark 2.11.

\subsection{Topology}  For any $\mathbin{\prec}\subseteq X\times X$, we define
\begin{align*}
x\prec\quad&=\quad\{y\in X:x\prec y\}.\\
\prec x\quad&=\quad\{y\in X:y\prec x\}.
\end{align*}
Define the open upper/lower balls/holes with centre $c\in X$ and radius $\epsilon$ by
\begin{align*}
c^\bullet_\epsilon\quad=\quad c\mathrel{<^\mathbf{d}_\epsilon}\quad&=\quad\{x\in X:\mathbf{d}(c,x)<\epsilon\}.\\
c_\bullet^\epsilon\quad=\hspace{13pt}\mathrel{<^\mathbf{d}_\epsilon}c\quad&=\quad\{x\in X:\mathbf{d}(x,c)<\epsilon\}.\\
c^\circ_\epsilon\quad=\hspace{13pt}\mathrel{>^\mathbf{d}_\epsilon}c\quad&=\quad\{x\in X:\mathbf{d}(x,c)>\epsilon\}.\\
c_\circ^\epsilon\quad=\quad c\mathrel{>^\mathbf{d}_\epsilon}\quad&=\quad\{x\in X:\mathbf{d}(c,x)>\epsilon\}.
\end{align*}
Let $X^\bullet$, $X_\bullet$, $X^\circ$, $X_\circ$, $X^\bullet_\bullet$, $X^\bullet_\circ$, $X^\circ_\bullet$ and $X^\circ_\circ$ denote the topologies generated by the corresponding balls and holes, i.e. by arbitrary unions of finite intersections.  Denote convergence by $\barrow$, $\arrowb$, $\carrow$, $\arrowc$, etc. so, for any net $(x_\lambda)\subseteq X$,
\begin{align*}
x_\lambda\barrow x\quad&\Leftrightarrow\quad\forall c\in X\ \lim\sup\mathbf{d}(c,x_\lambda)\leq\mathbf{d}(c,x).\\
x_\lambda\arrowb x\quad&\Leftrightarrow\quad\forall c\in X\ \lim\sup\mathbf{d}(x_\lambda,c)\leq\mathbf{d}(x,c).\\
x_\lambda\carrow x\quad&\Leftrightarrow\quad\forall c\in X\ \lim\inf\ \mathbf{d}(x_\lambda,c)\geq\mathbf{d}(x,c).\\
x_\lambda\arrowc x\quad&\Leftrightarrow\quad\forall c\in X\ \lim\inf\ \mathbf{d}(c,x_\lambda)\geq\mathbf{d}(c,x).
\end{align*}

Most of the literature on non-symmetric distances has focused on ball topologies (one of the few places hole topologies are mentioned is \cite{Goubault2013} Exercise 6.2.11).  However, it is really the hole topologies that are more intimately connected to the $\leq^\mathbf{d}$ order structure.  The double hole topology also defines our central concept.
\begin{dfn}\label{YC}
$X$ is \emph{$\mathbf{d}$-complete} if every $\mathbf{d}$-Cauchy net has a $X^\circ_\circ$-limit.
\end{dfn}
This was called $\liminf$-completeness in \cite{Wagner1997} and just completeness in \cite{Bonsangue1998}, although the original formulations differ somewhat from \autoref{YC} \textendash\, see the comments after \autoref{preCauchyconvergence}.  These days it is usually called Yoneda completeness to distinguish it from other similar notions (e.g. Smyth completeness where $X^\bullet_\bullet$ is considered instead of $X^\circ_\circ$ \textendash\, see \cite{Smyth1988}) but these will not be discussed here.

Let us point out that, while $\mathbf{d}$-Cauchy nets depend only $\Phi^\mathbf{d}$, the double hole topology $X^\circ_\circ$ depends crucially on $\mathbf{d}$, i.e. $\mathbf{d}$-completeness is not a `uniform property'.  Below we will use uniform concepts where possible, but the inherent non-uniform nature of $\mathbf{d}$-completeness means there is a limit to how much this can be done.

\subsection{Distances}  For $x\in X$, define $x\mathbf{d},\mathbf{d}x:X\rightarrow[0,\infty]$ by
\begin{align*}
x\mathbf{d}(y)&=\mathbf{d}(x,y).\\
\mathbf{d}x(y)&=\mathbf{d}(y,x).
\end{align*}
The composition of $\mathbf{d}$ and $\mathbf{e}$ is defined by
\[\mathbf{d}\circ\mathbf{e}(x,y)=\inf_{z\in X}(x\mathbf{d}+\mathbf{e}y)(z).\]
We call $\mathbf{d}$ a \emph{distance} if
\[\mathbf{d}\leq\mathbf{d}\circ\mathbf{d}.\tag{$\triangle$}\label{tri}\]
This implies $\mathbin{\leq^\mathbf{d}}\circ\mathbin{\leq^\mathbf{d}}\,\subseteq\,\mathbin{\leq^\mathbf{d}}$, i.e. $\mathbin{\leq^\mathbf{d}}$ is transitive.  As in \cite{Goubault2013} Definition 6.1.1, we call $\mathbf{d}$ a \emph{hemimetric} if $\leq^\mathbf{d}$ is also reflexive, i.e. a preorder.

Non-hemimetric distances have rarely been considered until now.  Requiring $\leq^\mathbf{d}$ to be reflexive may seem harmless, but there are indeed natural distances for which this fails, e.g. $\mathbf{d}(x,y)=x(1-y)$ on $[0,1]$ or its extension to the positive unit ball of an arbitrary C*-algebra.  There are also natural constructions which preserve \eqref{tri} but not $\leq^\mathbf{d}$-reflexivity.  For example, just as one extends $\mathbf{d}$ to a distance on subsets of $X$ in the Hausdorff-Hoare construction (see \cite{Goubault2013} Lemma 7.5.1), one can extend $\mathbf{d}$ to a distance on nets in $X$ as in \cite{Wagner1997} Proposition 2.6 by
\begin{equation}\label{HH}
\mathbf{d}((x_\lambda),(y_\gamma))=\limsup_\lambda\liminf_\gamma\mathbf{d}(x_\lambda,y_\gamma).
\end{equation}
However, even if $\leq^\mathbf{d}$ is reflexive on $X$, $\leq^\mathbf{d}$ may not be reflexive on all nets.  Indeed
\[(x_\lambda)\text{ is $\mathbf{d}$-reflexive}\quad\Leftrightarrow\quad(x_\lambda)\leq^\mathbf{d}(x_\lambda).\]
Moreover, the extra generality comes at little cost.  So let us now on assume that
\[\mathbf{d}\textbf{ and $\mathbf{e}$ are arbitrary distances on }X.\]

Now hole limits of $\mathbf{d}$-reflexive $(x_\lambda)\subseteq X$ can be characterized as follows.
\begin{align}
\label{arrowc}x_\lambda\arrowc x\quad&\Leftrightarrow\quad\mathbf{d}(x_\lambda,x)\rightarrow0.\\
\label{carrowc}x_\lambda\carrowc x\quad&\Leftrightarrow\quad x_\lambda\carrowb x\leq^\mathbf{d}x.
\end{align}

\begin{proof}\
\begin{itemize}
\item[\eqref{arrowc}]  If $x_\lambda\arrowc x$ then $\lim_\gamma\mathbf{d}(x_\gamma,x)\leq\lim_\gamma\liminf_\lambda\mathbf{d}(x_\gamma,x_\lambda)=0$.  If $\mathbf{d}(x_\lambda,x)\rightarrow0$ then $\mathbf{d}(c,x)\leq\lim\inf\mathbf{d}(c,x_\lambda)+\mathbf{d}(x_\lambda,x)=\lim\inf\mathbf{d}(c,x_\lambda)$, for any $c\in X$.

\item[\eqref{carrowc}]  If $\mathbf{d}(x_\lambda,x)\rightarrow0$ then $\mathbf{d}(x_\lambda,c)\leq\mathbf{d}(x_\lambda,x)+\mathbf{d}(x,c)\rightarrow\mathbf{d}(x,c)$ so $x_\lambda\arrowb x$.  If $x_\lambda\carrow x$ too then $\mathbf{d}(x,x)\leq\liminf\mathbf{d}(x_\lambda,x)=0$, i.e. $x\leq^\mathbf{d}x$.  Conversely, if $x_\lambda\arrowb x\leq^\mathbf{d}x$ then $\limsup\mathbf{d}(x_\lambda,x)\leq\mathbf{d}(x,x)=0$, i.e. $\mathbf{d}(x_\lambda,x)\rightarrow0$.\qedhere
\end{itemize}
\end{proof}

For an example of $\mathbf{d}$-reflexive $x_\lambda\carrowb x\nleq^\mathbf{d}x$, take any $x_\lambda\rightarrow0<x$ in $[0,\infty)$ where, for the distance $\mathbf{d}$, we simply consider the coordinate projection $\mathbf{d}(y,z)=z$.

In \cite{Goubault2013} Definition 7.1.15, any $x$ which satisfies $\mathbf{d}(x,y)=\limsup\mathbf{d}(x_\lambda,y)$, for all $y\in X$, is called a \emph{$\mathbf{d}$-limit} of $(x_\lambda)$ (these are called \emph{forward limits} in \cite{Bonsangue1998} before Proposition 3.3 and just \emph{limits} in \cite{KunziSchellekens2002} Definition 11).  In general, $\mathbf{d}$-limits are not true limits in any topological sense, as they are not preserved by taking subnets.  But for $\mathbf{d}$-pre-Cauchy nets, $\mathbf{d}$-limits are $X^\circ_\bullet$-limits, i.e. the limit superior will be a limit, as shown below and in \cite{Wagner1997} Theorem 2.26.

\begin{prp}\label{Clim}
For $\mathbf{d}$-pre-Cauchy $(x_\lambda)$ and $y\in X$, $\mathbf{d}(x_\lambda,y)$ and $\mathbf{d}(y,x_\lambda)$ converge.
\end{prp}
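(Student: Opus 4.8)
The plan is to prove convergence in the compact space $[0,\infty]$ by establishing, for each of the two nets of extended reals, that its limit superior is bounded above by its limit inferior; since the reverse inequality $\liminf\leq\limsup$ is automatic, this forces equality and hence convergence. The only structural input I need is the triangle inequality $\mathbf{d}(x,y)\leq\mathbf{d}(x,z)+\mathbf{d}(z,y)$, which is immediate from the distance axiom \eqref{tri}, together with the defining condition $\lim_\gamma\limsup_\delta\mathbf{d}(x_\gamma,x_\delta)=0$ of a $\mathbf{d}$-pre-Cauchy net.

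For $\mathbf{d}(y,x_\lambda)$, I would start from $\mathbf{d}(y,x_\delta)\leq\mathbf{d}(y,x_\gamma)+\mathbf{d}(x_\gamma,x_\delta)$ and take $\limsup_\delta$ with $\gamma$ fixed, obtaining $\limsup_\delta\mathbf{d}(y,x_\delta)\leq\mathbf{d}(y,x_\gamma)+\limsup_\delta\mathbf{d}(x_\gamma,x_\delta)$. Taking $\liminf_\gamma$ and using that the pre-Cauchy hypothesis makes the cross term $\limsup_\delta\mathbf{d}(x_\gamma,x_\delta)$ tend to $0$, the right-hand side collapses to $\liminf_\gamma\mathbf{d}(y,x_\gamma)$, giving $\limsup_\delta\mathbf{d}(y,x_\delta)\leq\liminf_\gamma\mathbf{d}(y,x_\gamma)$ as required.

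The net $\mathbf{d}(x_\lambda,y)$ is handled dually: from $\mathbf{d}(x_\gamma,y)\leq\mathbf{d}(x_\gamma,x_\delta)+\mathbf{d}(x_\delta,y)$ I take $\liminf_\delta$ for fixed $\gamma$ and bound the liminf of the sum by $\limsup_\delta\mathbf{d}(x_\gamma,x_\delta)+\liminf_\delta\mathbf{d}(x_\delta,y)$; applying $\limsup_\gamma$ and again invoking the pre-Cauchy condition to annihilate the double $\limsup$ term yields $\limsup_\gamma\mathbf{d}(x_\gamma,y)\leq\liminf_\delta\mathbf{d}(x_\delta,y)$.

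I expect the main obstacle to be purely book-keeping: justifying the manipulations of $\liminf$ and $\limsup$ of sums along the directed index set $\Lambda$, in the extended arithmetic of $[0,\infty]$. Specifically, I must use the correct subadditivity inequality $\liminf_\delta(a_\delta+b_\delta)\leq\limsup_\delta a_\delta+\liminf_\delta b_\delta$ and the fact that a summand with a genuine limit, here the pre-Cauchy cross term with limit $0$, may be split off cleanly; I also need to confirm these remain valid when values are $\infty$, which they do since addition on $[0,\infty]$ is order-continuous. None of this is deep, but the directions of the inequalities and the order in which the iterated $\liminf$ and $\limsup$ are taken must be tracked carefully, as that is exactly where the pre-Cauchy hypothesis is consumed.
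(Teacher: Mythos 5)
Your proof is correct and takes essentially the same route as the paper's: both establish $\limsup\leq\liminf$ for each of the two nets via the triangle inequality \eqref{tri}, the subadditivity of $\liminf$/$\limsup$ along the directed index set, and the pre-Cauchy condition to kill the cross term $\limsup_\delta\mathbf{d}(x_\gamma,x_\delta)$. The only cosmetic difference is that for $\mathbf{d}(y,x_\lambda)$ the paper rearranges the triangle inequality into a subtraction, bounding $\liminf_\lambda\mathbf{d}(y,x_\lambda)$ below by $\liminf_\lambda\limsup_\gamma\bigl(\mathbf{d}(y,x_\gamma)-\mathbf{d}(x_\lambda,x_\gamma)\bigr)$, whereas you keep all terms additive, which sidesteps any $\infty-\infty$ bookkeeping in $[0,\infty]$.
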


\begin{proof}  As $(x_\lambda)$ is $\mathbf{d}$-pre-Cauchy,
\begin{align*}
\limsup\limits_\lambda\mathbf{d}(x_\lambda,y)&\leq\limsup\limits_\lambda\liminf\limits_\gamma\mathbf{d}(x_\lambda,x_\gamma)+\mathbf{d}(x_\gamma,y)=\liminf\limits_\gamma\mathbf{d}(x_\gamma,y).\\
\liminf_\lambda\mathbf{d}(y,x_\lambda)&\geq\liminf_\lambda\limsup_\gamma\mathbf{d}(y,x_\gamma)-\mathbf{d}(x_\lambda,x_\gamma)=\limsup_\gamma\mathbf{d}(y,x_\gamma).\qedhere
\end{align*}
\end{proof}

\begin{cor}\label{preCauchyconvergence}
Any $\mathbf{d}$-pre-Cauchy net converges in $X^\bullet$, $X^\circ$, $X_\bullet$ or $X_\circ$ iff it has a subnet that converges in the same topology.
\end{cor}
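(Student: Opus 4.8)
The plan is to reduce everything to \autoref{Clim}. The forward implication is immediate from general topology: if a net converges to $x$ in one of the topologies $X^\bullet$, $X^\circ$, $X_\bullet$, $X_\circ$, then every subnet converges to $x$ in that same topology, so in particular it has a convergent subnet. The entire content is therefore in the reverse implication, where I must show that a single convergent subnet forces the whole net to converge.

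First I would record the elementary interlacing inequality for subnets: for a net $(r_\lambda)$ in $[0,\infty]$ and any subnet $(r_\mu)$,
\[\liminf_\lambda r_\lambda\ \leq\ \liminf_\mu r_\mu\ \leq\ \limsup_\mu r_\mu\ \leq\ \limsup_\lambda r_\lambda,\]
which holds because the cluster points of a subnet form a subset of those of the net (and in the compact space $[0,\infty]$ the $\liminf$ and $\limsup$ are the extreme cluster points). Consequently, whenever the full net converges, its $\liminf$ and $\limsup$ coincide, the inequalities collapse, and the subnet converges to the same value.

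Next I would apply \autoref{Clim}: since $(x_\lambda)$ is $\mathbf{d}$-pre-Cauchy, for every $c\in X$ both $\mathbf{d}(c,x_\lambda)$ and $\mathbf{d}(x_\lambda,c)$ converge in $[0,\infty]$. By the previous observation, along any subnet $(x_\mu)$ the quantities $\mathbf{d}(c,x_\mu)$ and $\mathbf{d}(x_\mu,c)$ share the same limits, and hence the same $\liminf$ and $\limsup$, as the corresponding quantities along $(x_\lambda)$. Crucially, I do not even need to verify that the subnet is itself pre-Cauchy, since the convergence is inherited directly from the full net.

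Finally I would invoke the characterizations of $\barrow$, $\arrowb$, $\carrow$ and $\arrowc$ convergence stated for arbitrary nets. Each is a condition, quantified over all $c\in X$, comparing one of $\limsup\mathbf{d}(c,x_\lambda)$, $\limsup\mathbf{d}(x_\lambda,c)$, $\liminf\mathbf{d}(x_\lambda,c)$ or $\liminf\mathbf{d}(c,x_\lambda)$ with $\mathbf{d}(c,x)$ or $\mathbf{d}(x,c)$. As each such $\limsup$/$\liminf$ agrees between $(x_\lambda)$ and any subnet, the defining condition holds for the subnet iff it holds for the whole net, treating $X^\bullet$, $X_\bullet$, $X^\circ$ and $X_\circ$ uniformly. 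I expect no serious obstacle: \autoref{Clim} upgrades the relevant $\limsup$/$\liminf$ to genuine limits and thereby renders them insensitive to passing to subnets. The only points needing a little care are the interlacing inequality above and bookkeeping of which of the four distance expressions and which comparison direction corresponds to each of the four topologies.
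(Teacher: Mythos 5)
Your proof is correct and is essentially the paper's own argument: the paper states this corollary without further proof precisely because, as you observe, \autoref{Clim} upgrades the relevant $\limsup$/$\liminf$ expressions to genuine limits, which are then insensitive to passing to subnets, and the four convergence conditions are exactly comparisons of these quantities against $\mathbf{d}(c,x)$ or $\mathbf{d}(x,c)$. Your explicit treatment of the interlacing inequality and of the fact that the subnet need not itself be pre-Cauchy just fills in the details the paper leaves implicit.
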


For distance $\mathbf{d}$, we could thus replace $\mathbf{d}$-Cauchy nets with $\mathbf{d}$-pre-Cauchy nets in \autoref{YC}, by \autoref{preCauchysub} and \autoref{preCauchyconvergence}.  And for hemimetric $\mathbf{d}$, \autoref{YC} agrees with the $\mathbf{d}$-limit definition of Yoneda completeness in \cite{Goubault2013} Definition 7.4.1.  We prefer $X^\circ_\circ$-limits to $X^\circ_\bullet$-limits/$\mathbf{d}$-limits for the following reasons.
\begin{enumerate}
\item $X^\circ_\circ$ seems more natural for general distances (e.g. $\mathbf{d}(x,y)=y$ noted above).
\item $X^\circ_\circ$ is self-dual, making it clear that the asymmetry in $\mathbf{d}$-completeness comes from the nets being considered rather than the topology.
\item $X^\circ_\circ$ already arises naturally in various situations (although this does not appear to be widely recognized), e.g. as the usual product topology for products of bounded intervals, as the Wijsman topology for subsets of $X$, and as the weak operator topology for projections on a Hilbert space.
\end{enumerate}

If $\mathbf{d}$ is a metric then limits of $\mathbf{d}$-Cauchy nets are the same in $X^\circ_\circ$ and $X^\bullet=X_\bullet$.  Thus $\mathbf{d}$-completeness generalizes the usual notion of metric completeness.  If we consider (the characteristic function of) a partial order $\mathbin{\preceq}\subseteq X\times X$ then $X^\circ_\circ$-limits of increasing nets are precisely their supremums, so $\mathbf{d}$-completeness also generalizes directed completeness.  Our main thesis is that, even in more general distances spaces, $\mathbf{d}$-completeness is a combination of metric and directed completeness.  To make this precise we need to extend the usual order theoretic notion of supremum.

\subsection{Supremums}  For $Y\subseteq X$ define
\begin{align*}
Y\mathbf{d}&=\sup y\mathbf{d}.\\
\mathbf{d}Y&=\inf\mathbf{d}y.
\end{align*}
Also define $Y\leq^\mathbf{d}x\ \Leftrightarrow\ Y\subseteq(\leq^\mathbf{d}x)$.  We define \emph{$\mathbf{d}$-supremums} of $Y\subseteq X$ by
\[x=\text{$\mathbf{d}$-$\sup Y$}\qquad\Leftrightarrow\qquad x\mathbf{d}=Y\mathbf{d}\quad\text{and}\quad Y\leq^\mathbf{d}x.\]
Note $=$ is a slight abuse of notation, as $\mathbf{d}$-supremums are only unique up to the equivalence relation $x\leq^\mathbf{d}y\leq^\mathbf{d}x$.  Also, we could replace $x\mathbf{d}=Y\mathbf{d}$ with $x\mathbf{d}\leq Y\mathbf{d}$, as $Y\leq^\mathbf{d}x\Rightarrow Y\mathbf{d}\leq x\mathbf{d}$.  Alternatively, we could replace $Y\leq^\mathbf{d}x$  with $x\leq^\mathbf{d}x$ as $x\mathbf{d}=Y\mathbf{d}$ implies $x\leq^\mathbf{d}x\,\Leftrightarrow\, x\mathbf{d}(x)=0\,\Leftrightarrow\, Y\mathbf{d}(x)=0\,\Leftrightarrow\, Y\leq^\mathbf{d}x$.

Note $\mathbf{d}$-supremums are $\leq^\mathbf{d}$-supremums, as $x\mathbf{d}=Y\mathbf{d}$ implies $\infty x\mathbf{d}=\infty Y\mathbf{d}$.  However, unless we place some extra condition on $\mathbf{d}$, the converse can fail e.g. if $\mathbf{d}(r,s)=(r-s)_+$ (where $r_+=r\vee0$) on $X=[0,1)\cup\{2\}$ then we see that \linebreak $2=\mathbin{\leq^\mathbf{d}}$-$\sup[0,1)\neq\mathbf{d}$-$\sup[0,1)$, as $\sup_{x\in[0,1)}\mathbf{d}(x,0)=1\neq2=\mathbf{d}(2,0)$.  Indeed, in general $\mathbf{d}$-supremums depend crucially on $\mathbf{d}$, not just $\leq^\mathbf{d}$ or even $\Phi^\mathbf{d}$.

One such condition would be `every closed lower ball has a maximum'.  In fact, something weaker suffices.  Specifically, consider the following functions on $[0,\infty]$.
\begin{align*}
\mathbf{d}^\bullet(r)&=\sup_{x\in X}\inf_{y\leq^\mathbf{d}x^\bullet_r}\mathbf{d}(x,y).\\
\mathbf{d}_\bullet(r)&=\sup_{x\in X}\inf_{x_\bullet^r\leq^\mathbf{d}y}\mathbf{d}(y,x).
\end{align*}
Also let $\mathbf{I}$ denote the identity on $[0,\infty]$ so
\[\mathbf{d}_\bullet\leq\mathbf{I}\quad\Leftrightarrow\quad\sup_{y\in Y}\mathbf{d}(y,x)=\inf_{Y\leq^\mathbf{d}y}\mathbf{d}(y,x)\text{ whenever }x\in X\supseteq Y.\]

\begin{prp}\label{Xcompdirected}
If $\mathbf{d}_\bullet\leq\mathbf{I}$ then $\leq^\mathbf{d}$-supremums are $\mathbf{d}$-supremums.
\end{prp}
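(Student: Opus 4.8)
The plan is to verify that a $\leq^\mathbf{d}$-supremum $x$ of $Y\subseteq X$ satisfies the two defining clauses of a $\mathbf{d}$-supremum, namely $Y\leq^\mathbf{d}x$ and $x\mathbf{d}=Y\mathbf{d}$. The first clause comes for free: a $\leq^\mathbf{d}$-supremum is in particular a $\leq^\mathbf{d}$-upper bound, so $Y\leq^\mathbf{d}x$ holds by definition, and as already noted in the text this forces $Y\mathbf{d}\leq x\mathbf{d}$. Thus the only thing left to establish is the reverse inequality $x\mathbf{d}\leq Y\mathbf{d}$.

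To prove $x\mathbf{d}\leq Y\mathbf{d}$ I would exploit the \emph{minimality} of $x$ among upper bounds of $Y$. Fix $w\in X$ and let $u$ range over the upper bounds of $Y$, i.e. those $u$ with $Y\leq^\mathbf{d}u$ (this set is nonempty, since $x$ itself is one). As $x$ is the \emph{least} such bound we have $x\leq^\mathbf{d}u$, that is $\mathbf{d}(x,u)=0$; feeding $z=u$ into \eqref{tri} then gives $\mathbf{d}(x,w)\leq\mathbf{d}(x,u)+\mathbf{d}(u,w)=\mathbf{d}(u,w)$. Taking the infimum over all upper bounds $u$ yields
\[\mathbf{d}(x,w)\ \leq\ \inf_{Y\leq^\mathbf{d}u}\mathbf{d}(u,w).\]

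This is where the hypothesis $\mathbf{d}_\bullet\leq\mathbf{I}$ does its work, via the reformulation recorded just above the statement: for every $w\in X$ and every $Y\subseteq X$ one has $\inf_{Y\leq^\mathbf{d}u}\mathbf{d}(u,w)=\sup_{y\in Y}\mathbf{d}(y,w)=Y\mathbf{d}(w)$. Substituting this into the displayed inequality gives $x\mathbf{d}(w)=\mathbf{d}(x,w)\leq Y\mathbf{d}(w)$ for every $w$, i.e. $x\mathbf{d}\leq Y\mathbf{d}$. Combined with the free inequality this is $x\mathbf{d}=Y\mathbf{d}$, so $x$ meets both clauses and is a $\mathbf{d}$-supremum of $Y$.

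The one nonroutine ingredient is the collapse of $\inf_{Y\leq^\mathbf{d}u}\mathbf{d}(u,w)$ down to $Y\mathbf{d}(w)$, which is exactly what $\mathbf{d}_\bullet\leq\mathbf{I}$ buys us; everything else is a one-line use of the triangle inequality together with minimality, so I anticipate no hidden obstacle. It is worth bearing in mind, however, that the hypothesis is genuinely needed: in the example $\mathbf{d}(r,s)=(r-s)_+$ on $[0,1)\cup\{2\}$ the infimum over upper bounds strictly exceeds $Y\mathbf{d}(w)$, which is precisely why $2=\mathbin{\leq^\mathbf{d}}$-$\sup[0,1)$ there fails to be a $\mathbf{d}$-supremum.
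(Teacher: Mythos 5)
Your proof is correct and takes essentially the same route as the paper's: both arguments rest on the reformulation of $\mathbf{d}_\bullet\leq\mathbf{I}$ displayed just before the proposition (the infimum of $\mathbf{d}(u,w)$ over $\leq^\mathbf{d}$-upper bounds $u$ of $Y$ equals $Y\mathbf{d}(w)$), combined with minimality of the $\leq^\mathbf{d}$-supremum among upper bounds and the triangle inequality. The only difference is presentational: the paper runs the argument by contradiction with a single witness $w$ satisfying $\mathbf{d}(w,x)<\mathbf{d}(z,x)$, whereas you take the infimum over all upper bounds directly.
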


\begin{proof}
Assume $Y\subseteq X$ and $z=\mathbin{\leq^\mathbf{d}}$-$\sup Y\neq\mathbf{d}$-$\sup Y$ so $\sup_{y\in Y}\mathbf{d}(y,x)<\mathbf{d}(z,x)$, for some $x\in X$.  As $\mathbf{d}_\bullet\leq\mathbf{I}$, we have $w\in X$ with $Y\leq^\mathbf{d}w$ and $\mathbf{d}(w,x)<\mathbf{d}(z,x)$.  But then $z=\mathbin{\leq^\mathbf{d}}$-$\sup Y\leq^\mathbf{d}w$ so $\mathbf{d}(z,x)\leq\mathbf{d}(w,x)$, a contradiction.
\end{proof}

We also need to generalize directedness.  Specifically, for $Y\subseteq X$ we define
\[Y\text{ is \emph{$\mathbf{d}$-directed}}\quad\Leftrightarrow\quad\forall F\in[Y]^{<\omega}\inf_{y\in Y}F\mathbf{d}(y)=0.\]
By \eqref{tri}, $[Y]^{<3}$ suffices.  Also define $Y\leq^\mathbf{d}(x_\lambda)\,\Leftrightarrow\,\mathbf{d}(y,x_\lambda)\rightarrow0$, for all $y\in Y$, so
\begin{align*}
Y\leq^\mathbf{d}(x_\lambda)\subseteq Y\quad&\Rightarrow\quad (x_\lambda)\text{ is $\mathbf{d}$-pre-Cauchy}.\\
\exists(x_\lambda)\ Y\leq^\mathbf{d}(x_\lambda)\subseteq Y\quad&\Leftrightarrow\quad Y\text{ is $\mathbf{d}$-directed}.
\end{align*}
Indeed, if $Y$ is $\mathbf{d}$-directed then, for $F\in[Y]^{<\omega}$ and $\epsilon>0$, take $y_{F,\epsilon}\in Y$ with $F\mathbf{d}(y_{F,\epsilon})<\epsilon$, so $Y\leq^\mathbf{d}(y_{F,\epsilon})\subseteq Y$, ordering $[Y]^{<\omega}\times(0,\infty)$ by $\subseteq\times\geq$.

\begin{dfn}\label{edcomplete}
$X$ is \emph{$\mathbf{e}$-$\mathbf{d}$-complete} if every $\mathbf{e}$-directed $Y\subseteq X$ has a $\mathbf{d}$-supremum.
\end{dfn}

If $\mathbin{\preceq}\subseteq X\times X$ is a partial order, $\preceq$-$\preceq$-completeness is directed completeness.  Thus both $\mathbf{d}$-$\mathbf{d}$-completeness and $\leq^\mathbf{d}$-$\mathbf{d}$-completeness are valid generalizations.  But if $\mathbf{d}$ is a metric then every $\mathbf{d}$-directed subset contains at most $1$ element, which makes $X$ trivially $\mathbf{d}$-$\mathbf{d}$-complete.  So, unlike $\mathbf{d}$-completeness, $\mathbf{d}$-$\mathbf{d}$-completeness does not generalize metric completeness.  In general, $\mathbf{d}$-completeness is a stronger notion, as we now show.

\begin{prp}\label{a1lim}
If $Y\leq^\mathbf{d}(x_\lambda)\subseteq Y$ and $x\in X$ then
\begin{align}
x_\lambda\arrowc x\quad&\Leftrightarrow\quad Y\leq^\mathbf{d}x.\label{a1holelim}\\
x_\lambda\carrow x\quad&\Leftrightarrow\quad x\mathbf{d}\leq Y\mathbf{d}.\label{a1limhole}\\
x_\lambda\carrowc x\quad &\Leftrightarrow\quad x=\mathbf{d}\text{-}\sup Y.\label{a1doubleholelim}
\end{align}
\end{prp}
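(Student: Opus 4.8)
The plan is to prove the three equivalences in turn, noting that the setup $Y\leq^\mathbf{d}(x_\lambda)\subseteq Y$ gives us two crucial facts to exploit: first, $(x_\lambda)$ is $\mathbf{d}$-pre-Cauchy (as recorded just before \autoref{edcomplete}), so \autoref{Clim} applies and all the relevant distances $\mathbf{d}(x_\lambda,c)$ and $\mathbf{d}(c,x_\lambda)$ converge; second, $Y\leq^\mathbf{d}(x_\lambda)$ means $\mathbf{d}(y,x_\lambda)\to 0$ for every $y\in Y$. The strategy is to translate each hole-convergence condition into a statement about $\liminf$ or $\limsup$ of distances via the definitions of $\arrowc$ and $\carrow$, and then use these two facts to identify the limits with the supremum data $x\mathbf{d}=Y\mathbf{d}$ and $Y\leq^\mathbf{d}x$.

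For \eqref{a1holelim}, I would start from the definition $x_\lambda\arrowc x\Leftrightarrow\forall c\ \liminf\mathbf{d}(c,x_\lambda)\geq\mathbf{d}(c,x)$. The forward direction should follow by testing at $c=y$ for each $y\in Y$: since $\mathbf{d}(y,x_\lambda)\to 0$, the condition forces $\mathbf{d}(y,x)\leq 0$, i.e. $Y\leq^\mathbf{d}x$. For the converse, given $Y\leq^\mathbf{d}x$ and any $c\in X$, I would bound $\mathbf{d}(c,x)\leq\mathbf{d}(c,y)+\mathbf{d}(y,x)=\mathbf{d}(c,y)$ using the triangle inequality \eqref{tri} and $\mathbf{d}(y,x)=0$; taking an infimum over a well-chosen sequence of $y$'s coming from $\mathbf{d}$-directedness, together with $\mathbf{d}(y,x_\lambda)\to 0$, should yield $\mathbf{d}(c,x)\leq\liminf\mathbf{d}(c,x_\lambda)$. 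This is the step I expect to require the most care: relating $\mathbf{d}(c,x)$ to the $\liminf$ over $\lambda$ is not immediate from a single $y$, and I anticipate needing the interplay $\mathbf{d}(c,x_\lambda)\geq\mathbf{d}(c,x)-\mathbf{d}(x,x_\lambda)$ or a comparison through the directedness of $Y$ to push the estimate through uniformly.

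For \eqref{a1limhole}, the definition gives $x_\lambda\carrow x\Leftrightarrow\forall c\ \liminf\mathbf{d}(x_\lambda,c)\geq\mathbf{d}(x,c)$, and I want this equivalent to $x\mathbf{d}\leq Y\mathbf{d}$, i.e. $\mathbf{d}(x,c)\leq\sup_{y\in Y}\mathbf{d}(y,c)$ for all $c$. The key observation is that for $\mathbf{d}$-pre-Cauchy nets drawn from $Y$, the quantity $\liminf_\lambda\mathbf{d}(x_\lambda,c)$ should coincide with $\sup_{y\in Y}\mathbf{d}(y,c)=Y\mathbf{d}(c)$; one inequality comes from $(x_\lambda)\subseteq Y$ so each $\mathbf{d}(x_\lambda,c)\leq Y\mathbf{d}(c)$ giving $\liminf\leq Y\mathbf{d}(c)$, and the reverse from the triangle inequality $\mathbf{d}(y,c)\leq\mathbf{d}(y,x_\lambda)+\mathbf{d}(x_\lambda,c)$ combined with $\mathbf{d}(y,x_\lambda)\to 0$, yielding $\mathbf{d}(y,c)\leq\liminf\mathbf{d}(x_\lambda,c)$ for each $y$ and hence $Y\mathbf{d}(c)\leq\liminf\mathbf{d}(x_\lambda,c)$. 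Once $\liminf\mathbf{d}(x_\lambda,c)=Y\mathbf{d}(c)$ is established, the equivalence $x_\lambda\carrow x\Leftrightarrow x\mathbf{d}\leq Y\mathbf{d}$ is immediate from the definition.

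Finally, \eqref{a1doubleholelim} should follow by combining the previous two: $\carrowc$ convergence is simultaneous $\arrowc$ and $\carrow$ convergence, so by \eqref{a1holelim} and \eqref{a1limhole} it is equivalent to $Y\leq^\mathbf{d}x$ together with $x\mathbf{d}\leq Y\mathbf{d}$, which is exactly the definition of $x=\mathbf{d}\text{-}\sup Y$ (recalling from the Supremums subsection that $x\mathbf{d}\leq Y\mathbf{d}$ may be used in place of $x\mathbf{d}=Y\mathbf{d}$ since $Y\leq^\mathbf{d}x$ forces $Y\mathbf{d}\leq x\mathbf{d}$). I expect the first two parts to carry all the real content, with the main obstacle being the careful $\liminf$ estimates in the converse of \eqref{a1holelim}; once the identity $\liminf\mathbf{d}(x_\lambda,c)=Y\mathbf{d}(c)$ and its dual are in hand, the rest is bookkeeping against the definitions.
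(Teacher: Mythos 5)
Your treatments of \eqref{a1limhole} and \eqref{a1doubleholelim} are correct and essentially the paper's: the identity $\liminf_\lambda\mathbf{d}(x_\lambda,c)=Y\mathbf{d}(c)$ (one inequality from $(x_\lambda)\subseteq Y$, the other from $\mathbf{d}(y,c)\leq\mathbf{d}(y,x_\lambda)+\mathbf{d}(x_\lambda,c)$ and $\mathbf{d}(y,x_\lambda)\to0$) is exactly what the paper's two displayed estimates amount to, and your reduction of \eqref{a1doubleholelim} to the first two parts, via the remark from the Supremums subsection that $x\mathbf{d}\leq Y\mathbf{d}$ may replace $x\mathbf{d}=Y\mathbf{d}$ in the presence of $Y\leq^\mathbf{d}x$, is also the paper's. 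The forward direction of \eqref{a1holelim} (test the definition of $\arrowc$ at $c=y$) is fine as well.

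The gap is in the converse of \eqref{a1holelim}, which you flag as the hard step and leave unfinished. In fact it is immediate from the inequality you already wrote, once you use that the net lies \emph{inside} $Y$: for each $\lambda$ we have $x_\lambda\in Y$, so your bound $\mathbf{d}(c,x)\leq\mathbf{d}(c,y)+\mathbf{d}(y,x)=\mathbf{d}(c,y)$, valid for every $y\in Y$ because $\mathbf{d}(y,x)=0$, applies with $y=x_\lambda$, giving $\mathbf{d}(c,x)\leq\mathbf{d}(c,x_\lambda)$ for every single $\lambda$ and hence $\mathbf{d}(c,x)\leq\liminf_\lambda\mathbf{d}(c,x_\lambda)$. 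No infimum over a directed family of $y$'s, no uniformity, and no use of $\mathbf{d}(y,x_\lambda)\to0$ is needed in this direction. (Equivalently, the paper observes $\mathbf{d}(x_\lambda,x)=0$ for all $\lambda$, i.e. $\mathbf{d}(x_\lambda,x)\to0$, and quotes \eqref{arrowc}.) Moreover, the fallback estimate you anticipate, $\mathbf{d}(c,x_\lambda)\geq\mathbf{d}(c,x)-\mathbf{d}(x,x_\lambda)$, is not available for a non-symmetric distance: the triangle inequality $\mathbf{d}(c,x)\leq\mathbf{d}(c,x_\lambda)+\mathbf{d}(x_\lambda,x)$ produces the subtracted term $\mathbf{d}(x_\lambda,x)$ (which vanishes here), not $\mathbf{d}(x,x_\lambda)$, and the latter is completely uncontrolled under the hypotheses \textendash\ e.g. for an increasing net in a poset with supremum $x$, one has $\mathbf{d}(x,x_\lambda)=\infty$ unless $x\leq^\mathbf{d} x_\lambda$. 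So the route you sketched would not go through as written; the one-line specialization $y=x_\lambda$ is what closes it.
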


\begin{proof}
\item{\eqref{a1holelim}} If $Y\leq^\mathbf{d}x$ then $\mathbf{d}(x_\lambda,x)=0$, for all $\lambda$, so $x_\lambda\arrowc x$, by \eqref{arrowc}.  While if $x_\lambda\arrowc x$ and $y\in Y$ then $\mathbf{d}(y,x)\leq\liminf\mathbf{d}(y,x_\lambda)=0$, as $Y\leq^\mathbf{d}(x_\lambda)$, i.e. $Y\leq^\mathbf{d}x$. 

\item{\eqref{a1limhole}}  If $x\mathbf{d}\leq Y\mathbf{d}$ then, as $Y\leq^\mathbf{d}(x_\lambda)$, for any $z\in X$ we have
\[\mathbf{d}(x,z)\leq\sup_{y\in Y}\mathbf{d}(y,z)\leq\sup_{y\in Y}\liminf(\mathbf{d}(y,x_\lambda)+\mathbf{d}(x_\lambda,z))=\liminf\mathbf{d}(x_\lambda,z).\]
While if $x_\lambda\carrow x$ then $x\mathbf{d}(z)=\mathbf{d}(x,z)\leq\liminf\mathbf{d}(x_\lambda,z)\leq Y\mathbf{d}(z)$, for all $z\in X$.

\item{\eqref{a1doubleholelim}} See \eqref{a1holelim} and \eqref{a1limhole}.
\end{proof}

\begin{cor}
If $X$ is $\mathbf{d}$-complete then $X$ is $\mathbf{d}$-$\mathbf{d}$-complete.
\end{cor}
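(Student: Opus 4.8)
The plan is to translate $\mathbf{d}$-directedness into the language of $\mathbf{d}$-Cauchy nets, apply $\mathbf{d}$-completeness to obtain a double hole limit, and then read off a $\mathbf{d}$-supremum using \autoref{a1lim}. Concretely, I would take an arbitrary $\mathbf{d}$-directed $Y\subseteq X$ and invoke the equivalence recorded just before \autoref{edcomplete}: $\mathbf{d}$-directedness of $Y$ yields a net $(x_\lambda)\subseteq Y$ with $Y\leq^\mathbf{d}(x_\lambda)$, and any such net is automatically $\mathbf{d}$-pre-Cauchy. Hence \autoref{preCauchysub} supplies a $\mathbf{d}$-Cauchy subnet $(x_{\lambda_\mu})$.

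Next I would observe that this subnet still satisfies the hypothesis $Y\leq^\mathbf{d}(x_{\lambda_\mu})\subseteq Y$ of \autoref{a1lim}. This is immediate, since $Y\leq^\mathbf{d}(x_\lambda)$ unwinds to $\mathbf{d}(y,x_\lambda)\rightarrow0$ for every $y\in Y$, and convergence of a net to $0$ passes to every subnet, while membership in $Y$ obviously does too. As $(x_{\lambda_\mu})$ is now $\mathbf{d}$-Cauchy, $\mathbf{d}$-completeness (\autoref{YC}) provides some $x\in X$ with $x_{\lambda_\mu}\carrowc x$.

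Finally I would feed this into \eqref{a1doubleholelim} of \autoref{a1lim}, which converts the double hole limit $x_{\lambda_\mu}\carrowc x$ directly into $x=\mathbf{d}\text{-}\sup Y$. Since the right-hand side depends only on $Y$ and $x$, not on the indexing net, this exhibits a $\mathbf{d}$-supremum of $Y$ and establishes $\mathbf{d}$-$\mathbf{d}$-completeness. The one point needing care — the main, if mild, obstacle — is precisely this subnet bookkeeping: one must confirm that passing to a $\mathbf{d}$-Cauchy subnet preserves $Y\leq^\mathbf{d}(x_{\lambda_\mu})$, for it is this that licenses applying \autoref{a1lim} to the subnet rather than to the original net. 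Alternatively, one could avoid subnets altogether by appealing to the remark after \autoref{preCauchyconvergence} that, for a distance, $\mathbf{d}$-completeness may be tested directly on $\mathbf{d}$-pre-Cauchy nets, applying \eqref{a1doubleholelim} to $(x_\lambda)$ itself.
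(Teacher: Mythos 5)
Your proposal is correct and is essentially identical to the paper's own proof: take $(x_\lambda)$ with $Y\leq^\mathbf{d}(x_\lambda)\subseteq Y$ from $\mathbf{d}$-directedness, pass to a $\mathbf{d}$-Cauchy subnet via \autoref{preCauchysub} (noting, as the paper does parenthetically, that $Y\leq^\mathbf{d}(x_\lambda)$ survives this passage), apply $\mathbf{d}$-completeness to get a $X^\circ_\circ$-limit, and conclude by \eqref{a1doubleholelim}. Your explicit verification of the subnet bookkeeping, and the alternative of testing completeness directly on pre-Cauchy nets, are fine but do not change the route.
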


\begin{proof}
For any $\mathbf{d}$-directed $Y\subseteq X$, take $(x_\lambda)$ with $Y\leq^\mathbf{d}(x_\lambda)\subseteq Y$.  By \autoref{preCauchysub}, we may revert to a Cauchy subnet (which still satisfies $Y\leq^\mathbf{d}(x_\lambda)$).  As $X$ is $\mathbf{d}$-complete, $x_\lambda\carrowc x$, for some $x\in X$.  By \eqref{a1doubleholelim}, $x$ is a $\mathbf{d}$-supremum of $Y$.
\end{proof}

For $\mathbf{d}$-pre-Cauchy $(x_\lambda)\subseteq X$, it will also be convenient to define
\begin{align*}
(x_\lambda)\mathbf{d}&=\lim x_\lambda\mathbf{d}.\\
\mathbf{d}(x_\lambda)&=\lim\mathbf{d}x_\lambda.
\end{align*}
It then follows immediately from the definitions that
\begin{align*}
x_\lambda\arrowc x\quad&\Leftrightarrow\quad\mathbf{d}x\leq\mathbf{d}(x_\lambda).\\
x_\lambda\carrow x\quad&\Leftrightarrow\quad x\mathbf{d}\leq(x_\lambda)\mathbf{d}.\\
x_\lambda\carrowb x\quad&\Leftrightarrow\quad x\mathbf{d}=(x_\lambda)\mathbf{d}.
\end{align*}

\begin{prp}
If $Y$ is $\mathbf{d}$-directed and $(x_\lambda)\subseteq X$ is $\mathbf{d}$-pre-Cauchy then
\[Y\leq^\mathbf{d}(x_\lambda)\quad\Leftrightarrow\quad\mathbf{d}(x_\lambda)\leq\mathbf{d}Y.\]
\end{prp}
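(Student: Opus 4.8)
The plan is to unwind both sides into pointwise statements over $X$ and then exploit the triangle inequality \eqref{tri} in one direction and $\mathbf{d}$-directedness in the other. Since $(x_\lambda)$ is $\mathbf{d}$-pre-Cauchy, \autoref{Clim} guarantees that $\mathbf{d}(z,x_\lambda)$ converges for every $z\in X$, so $\mathbf{d}(x_\lambda)(z)=\lim_\lambda\mathbf{d}(z,x_\lambda)$ is well defined; meanwhile $\mathbf{d}Y(z)=\inf_{y\in Y}\mathbf{d}(z,y)$ by definition. Thus $\mathbf{d}(x_\lambda)\leq\mathbf{d}Y$ means precisely that $\lim_\lambda\mathbf{d}(z,x_\lambda)\leq\inf_{y\in Y}\mathbf{d}(z,y)$ for all $z\in X$, while $Y\leq^\mathbf{d}(x_\lambda)$ means $\mathbf{d}(y,x_\lambda)\rightarrow0$ for every $y\in Y$.

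For the forward implication I would fix $z\in X$ and $y\in Y$ and start from $\mathbf{d}(z,x_\lambda)\leq\mathbf{d}(z,y)+\mathbf{d}(y,x_\lambda)$, which is just \eqref{tri}. Passing to the limit in $\lambda$ (the limit exists by \autoref{Clim}) and using the hypothesis $\mathbf{d}(y,x_\lambda)\rightarrow0$ gives $\lim_\lambda\mathbf{d}(z,x_\lambda)\leq\mathbf{d}(z,y)$. As $y\in Y$ was arbitrary, taking the infimum over $y$ yields $\lim_\lambda\mathbf{d}(z,x_\lambda)\leq\mathbf{d}Y(z)$, i.e. $\mathbf{d}(x_\lambda)\leq\mathbf{d}Y$. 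Note this direction uses nothing about $Y$ being $\mathbf{d}$-directed.

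For the converse I would fix $y\in Y$ and evaluate the hypothesis $\mathbf{d}(x_\lambda)\leq\mathbf{d}Y$ at the point $z=y$, obtaining $\lim_\lambda\mathbf{d}(y,x_\lambda)\leq\inf_{y'\in Y}\mathbf{d}(y,y')$. This is exactly where $\mathbf{d}$-directedness enters: applying it to the singleton $F=\{y\}\in[Y]^{<\omega}$ gives $\inf_{y'\in Y}\mathbf{d}(y,y')=\inf_{y'\in Y}F\mathbf{d}(y')=0$, so $\lim_\lambda\mathbf{d}(y,x_\lambda)=0$ and hence $Y\leq^\mathbf{d}(x_\lambda)$. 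The only real subtlety is recognizing that directedness is needed merely in its singleton form, which plays the role of a reflexivity-within-$Y$ condition ensuring each $y\in Y$ is approximated arbitrarily well from inside $Y$; everything else is the triangle inequality together with the convergence supplied by \autoref{Clim}.
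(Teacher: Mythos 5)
Your proof is correct and takes essentially the same route as the paper's: the forward direction is the triangle inequality $\mathbf{d}(z,x_\lambda)\leq\mathbf{d}(z,y)+\mathbf{d}(y,x_\lambda)$ passed to the limit (which exists by \autoref{Clim}), and the converse evaluates the hypothesis at $z=y$ and applies $\mathbf{d}$-directedness to the singleton $\{y\}$ to get $\mathbf{d}Y(y)=0$. Your explicit remarks that directedness is only used in its singleton form and only for the converse are accurate but do not change the substance; the argument coincides with the paper's.
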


\begin{proof}
If $y\in Y\leq^\mathbf{d}(x_\lambda)$ and $x\in X$ then $\mathbf{d}(x,x_\lambda)\leq\mathbf{d}(x,y)+\mathbf{d}(y,x_\lambda)\rightarrow\mathbf{d}(x,y)$, i.e. $\mathbf{d}(x_\lambda)\leq\mathbf{d}y$, for all $y\in Y$, so $\mathbf{d}(x_\lambda)\leq\mathbf{d}Y$.  While if $\mathbf{d}(x_\lambda)\leq\mathbf{d}Y$ and $y\in Y$ then $\lim\mathbf{d}(y,x_\lambda)\leq\mathbf{d}Y(y)=0$, as $Y$ is $\mathbf{d}$-directed, i.e. $Y\leq^\mathbf{d}(x_\lambda)$.
\end{proof}

\section{Cauchy Nets}\label{Completeness}

In this section we make the following standing assumption
\[(x_\lambda)\subseteq X\textbf{ is $\mathbf{d}$-Cauchy}.\]

For our first result we could assume `every closed upper ball has a minimum'.  As in \autoref{Xcompdirected}, we can weaken this to $\mathbf{d}^\bullet\leq\mathbf{I}$, but here even $\mathbf{d}^\bullet\precapprox\mathbf{I}$ suffices, where $\precapprox$ is `uniform subequivalence'.  Specifically, for $f,g:X\rightarrow[0,\infty]$, define
\begin{align*}
\sup_{g(x)\leq r}f(x)\ &=\ ^f\!/\!_g(r)\\
f\precapprox g\ &\Leftrightarrow\ ^f\!/\!_g(r)\rightarrow0.
\end{align*}
So $\mathbf{d}^\bullet\precapprox\mathbf{I}\ \Leftrightarrow\ \lim\limits_{r\rightarrow0}\mathbf{d}^\bullet(r)=0\ \Leftrightarrow\ \forall\,\mathbin{\preceq}\in\Phi^\mathbf{d}\ \exists\,\mathbin{\precsim}\in\Phi^\mathbf{d}\ \forall x\in X\ \exists y\leq^\mathbf{d}\hspace{-3pt}(x\precsim)\ x\preceq y$.

\begin{thm}\label{Cauchytodirected}
If $\mathbf{d}^\bullet\precapprox\mathbf{I}$ then we have $\leq^\mathbf{d}$-directed $Y\subseteq X$ with
\[Y\mathbf{d}=(x_\lambda)\mathbf{d}\qquad\text{and}\qquad\mathbf{d}Y=\mathbf{d}(x_\lambda).\]
\end{thm}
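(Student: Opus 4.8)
The plan is to realize $Y$ as a family of \emph{approximate lower bounds} of the tails of $(x_\lambda)$, extracted via the hypothesis $\mathbf{d}^\bullet\precapprox\mathbf{I}$. Concretely, for each small $r>0$ (small enough that $\mathbf{d}^\bullet(r)<\infty$) fix, using that $(x_\lambda)$ is $\mathbf{d}$-Cauchy, an index $\beta_r$ with $\mathbf{d}(x_\gamma,x_\delta)<\tfrac12 r$ whenever $\beta_r\prec\gamma\prec\delta$; then for every $\gamma\succ\beta_r$ the tail $\{x_\delta:\delta\succ\gamma\}$ lies inside the open ball $\{z:\mathbf{d}(x_\gamma,z)<r\}$. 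Since $\mathbf{d}^\bullet(r)=\sup_x\inf_{y\leq^\mathbf{d}x^\bullet_r}\mathbf{d}(x,y)$, I may pick $y_{\gamma,r}\in X$ with $y_{\gamma,r}\leq^\mathbf{d}\{z:\mathbf{d}(x_\gamma,z)<r\}$ and $\mathbf{d}(x_\gamma,y_{\gamma,r})<\eta_r:=\mathbf{d}^\bullet(r)+r$, where $\eta_r\to0$ as $r\to0$ by $\mathbf{d}^\bullet\precapprox\mathbf{I}$. I then set $Y=\{y_{\gamma,r}:r\text{ small},\ \gamma\succ\beta_r\}$. Note each $y_{\gamma,r}$ is an \emph{exact} $\leq^\mathbf{d}$-lower bound of the tail past $\gamma$, i.e.\ $\mathbf{d}(y_{\gamma,r},x_\delta)=0$ for all $\delta\succ\gamma$, while it is only \emph{approximately} close to $x_\gamma$.

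The two distance equalities are then routine triangle-inequality computations. For $\mathbf{d}Y=\mathbf{d}(x_\lambda)$: the lower bound property gives $\mathbf{d}(z,x_\delta)\leq\mathbf{d}(z,y_{\gamma,r})$ for $\delta\succ\gamma$, and letting $\delta$ run up the net (using \autoref{Clim} so the limit exists) yields $\mathbf{d}(x_\lambda)(z)\leq\mathbf{d}(z,y_{\gamma,r})$, hence $\mathbf{d}(x_\lambda)\leq\mathbf{d}Y$; conversely $\mathbf{d}(z,y_{\gamma,r})\leq\mathbf{d}(z,x_\gamma)+\eta_r$, and since $\{\gamma\succ\beta_r\}$ is cofinal the infimum of $\mathbf{d}(z,x_\gamma)$ over it is at most $\lim_\gamma\mathbf{d}(z,x_\gamma)=\mathbf{d}(x_\lambda)(z)$, so letting $r\to0$ gives $\mathbf{d}Y\leq\mathbf{d}(x_\lambda)$. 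The equality $Y\mathbf{d}=(x_\lambda)\mathbf{d}$ is entirely symmetric, using $\mathbf{d}(y_{\gamma,r},z)\leq\mathbf{d}(x_\delta,z)$ for one inequality and $\mathbf{d}(x_\gamma,z)\leq\eta_r+\mathbf{d}(y_{\gamma,r},z)$ together with cofinality for the other. Crucially these arguments range over all sufficiently large $\gamma$, which sidesteps any need for a countable cofinal sequence in $\Lambda$.

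The main obstacle is \emph{exact} $\leq^\mathbf{d}$-directedness of $Y$: because $\mathbf{d}^\bullet\precapprox\mathbf{I}$ supplies only approximate minima, a naive estimate gives $\mathbf{d}(y_{\gamma_i,r_i},y_{\gamma,r})$ small but not obviously $0$, whereas $\leq^\mathbf{d}$-directedness demands that it vanish identically. The resolution is to exploit that $y_{\gamma_i,r_i}$ is a lower bound of the \emph{open} ball $\{z:\mathbf{d}(x_{\gamma_i},z)<r_i\}$: it therefore dominates, exactly, any point landing strictly inside that ball. So, given finitely many $y_{\gamma_1,r_1},\dots,y_{\gamma_k,r_k}$, I choose $r$ small enough that $\eta_r<\tfrac12\min_i r_i$ and then $\gamma\succ\beta_r$ with $\gamma\succ\gamma_i$ for all $i$. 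The Cauchy choice of $\beta_{r_i}$ gives $\mathbf{d}(x_{\gamma_i},x_\gamma)<\tfrac12 r_i$, whence $\mathbf{d}(x_{\gamma_i},y_{\gamma,r})\leq\mathbf{d}(x_{\gamma_i},x_\gamma)+\mathbf{d}(x_\gamma,y_{\gamma,r})<\tfrac12 r_i+\eta_r<r_i$. Thus $y_{\gamma,r}$ lies strictly inside each ball $\{z:\mathbf{d}(x_{\gamma_i},z)<r_i\}$, so $y_{\gamma_i,r_i}\leq^\mathbf{d}y_{\gamma,r}$ for every $i$, exhibiting $y_{\gamma,r}\in Y$ as a common $\leq^\mathbf{d}$-upper bound and proving that $Y$ is $\leq^\mathbf{d}$-directed.
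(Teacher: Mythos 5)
Your proof is correct, and it runs on the same engine as the paper's: Cauchyness traps the tails of $(x_\lambda)$ inside small open upper balls, $\mathbf{d}^\bullet\precapprox\mathbf{I}$ supplies points near each ball's centre that are \emph{exact} $\leq^\mathbf{d}$-lower bounds of the whole ball, and exact $\leq^\mathbf{d}$-relations between these approximate choices are recovered from the observation that anything landing strictly inside a ball is dominated by that ball's chosen lower bound. The difference is organizational. The paper runs a recursion over the directed set $([\Lambda]^{<\omega},\subsetneqq)$: it fixes $r_n\downarrow0$ calibrated by $\mathbf{d}^\bullet(2r_{n+1})<r_n$, builds $f:[\Lambda]^{<\omega}\rightarrow\Lambda$ with $\sup_{f(F)\prec\lambda}\mathbf{d}(x_{f(F)},x_\lambda)<r_{|F|}$, and picks one lower bound $y_F$ per finite set, so that the containment trick yields $y_F\leq^\mathbf{d}y_G$ whenever $F\subsetneqq G$; thus $(y_F)$ is a monotone net over a directed index set and directedness of $Y$ is automatic. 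You instead use the flat two-parameter family $y_{\gamma,r}$ (with $\gamma$ past a Cauchy threshold $\beta_r$) and verify directedness on demand, by dropping to a smaller radius and a later index. Your route avoids the recursive subnet construction and the coupled calibration of the $r_n$, which makes it somewhat more elementary; the paper's route gives the structurally stronger conclusion that $Y$ is the image of a $\leq^\mathbf{d}$-increasing net indexed by $[\Lambda]^{<\omega}$ (a recursion pattern it reuses in \autoref{CDS}), though only directedness is needed for the statement. The distance equalities are obtained identically in both arguments: one inequality because each chosen point is an exact lower bound of a tail, the other because $\mathbf{d}(x_\gamma,y_{\gamma,r})\rightarrow0$, with \autoref{Clim} guaranteeing the relevant limits exist.
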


\begin{proof}
As $\mathbf{d}^\bullet\precapprox\mathbf{I}$, i.e. $\lim_{r\rightarrow0}\mathbf{d}^\bullet(r)=0$, we can define $r_n\downarrow0$ with $\mathbf{d}^\bullet(2r_{n+1})<r_n$.

As $(x_\lambda)$ is $\mathbf{d}$-Cauchy, we can define $f:[\Lambda]^{<\omega}\rightarrow\Lambda$ as follows.  Let $f(\{\lambda\})=\lambda$ and, given $F\in[\Lambda]^{<\omega}$ with $|F|>1$, take $f(F)\succ f(E)$, for all $E\subsetneqq F$, such that
\[\sup_{f(F)\prec\lambda}\mathbf{d}(x_{f(F)},x_\lambda)<r_{|F|}.\]
As $\mathbf{d}^\bullet(2r_{|F|})<r_{|F|-1}$, we can take $y_F\leq^\mathbf{d}(x_{f(F)})^\bullet_{2r_{|F|}}$ with $\mathbf{d}(x_{f(F)},y_F)<r_{|F|-1}$.  If $F\subsetneqq G$ then $\mathbf{d}(x_{f(F)},y_G)\leq\mathbf{d}(x_{f(F)},x_{f(G)})+\mathbf{d}(x_{f(G)},y_G)<2r_{|F|}$ and hence $y_G\in(x_{f(F)})^\bullet_{2r_{|F|}}$ so $y_F\leq^\mathbf{d}y_G$.  Thus $Y=\{y_F:F\in[\Lambda]^{<\omega}\}$ is $\leq^\mathbf{d}$-directed.  For $\lambda\succ f(F)$, $x_\lambda\in(x_{f(F)})^\bullet_{r_{|F|}}\subseteq(x_{f(F)})^\bullet_{2r_{|F|}}$ so $y_F\leq^\mathbf{d}x_\lambda$.  Thus $Y\leq^\mathbf{d}(x_\lambda)$ so
\[Y\mathbf{d}\leq(x_\lambda)\mathbf{d}\qquad\text{and}\qquad\mathbf{d}Y\geq\mathbf{d}(x_\lambda).\]
Also $\mathbf{d}(x_{f(F)},y_F)\leq r_{|F|-1}\rightarrow0$ so
\[Y\mathbf{d}\geq(x_\lambda)\mathbf{d}\qquad\text{and}\qquad\mathbf{d}Y\leq\mathbf{d}(x_\lambda).\qedhere\]
\end{proof}

Thus $\mathbf{d}$-completeness follows from $\leq^\mathbf{d}$-$\mathbf{d}$-completeness when $\mathbf{d}^\bullet\precapprox\mathbf{I}$.  As noted after \autoref{edcomplete}, consideration of metric $\mathbf{d}$ shows we can not drop the condition $\mathbf{d}^\bullet\precapprox\mathbf{I}$.  But it does suggest we might replace $\mathbf{d}^\bullet\precapprox\mathbf{I}$ with metric completeness.  More precisely, letting $\mathbf{d}^\mathrm{op}(x,y)=(y,x)$ and $\mathbf{d}^\vee=\mathbf{d}\vee\mathbf{d}^\mathrm{op}$, we might ask if
\begin{equation}\label{q1}
\text{$\mathbf{d}$-complete}\qquad\Leftrightarrow\qquad\text{$\leq^\mathbf{d}$-$\mathbf{d}$-complete}\quad\text{and}\quad\text{$\mathbf{d}^\vee$-complete}?
\end{equation}
In general, the answer is no, as the following simple example shows.

Consider the sequence $(f_m)$ in $[0,\infty]^\mathbb{N}$ defined by
\[f_m(n)=\begin{cases}\infty &\text{if }n<m,\\ 0 &\text{if }n=m\\ 1/n &\text{if }n>m,\end{cases}\]
Set $X=\{f_m:m\in\mathbb{N}\}$ and $\mathbf{d}(f,g)=\sup(f(n)-g(n))_+$.  Then $\leq^\mathbf{d}$ and $\mathbf{d}^\vee$ become identified with $=$ so $X$ is trivially $\leq^\mathbf{d}$-$\mathbf{d}$-complete and $\mathbf{d}^\vee$-complete, even though $(f_m)$ is $\mathbf{d}$-Cauchy with no $X^\circ_\circ$-limit in $X$ ($f_m\carrowc f_\infty$ in $[0,\infty]^\mathbb{N}$ but $f_\infty\notin X$).

Thus if we are to have any hope of proving \eqref{q1}, we need some extra condition.  We could use $\mathbf{d}_\bullet\leq\mathbf{I}$ as in \autoref{Xcompdirected} or the significantly weaker assumption `every open lower ball is directed'.  Again, we can even describe slightly weaker conditions that suffice if we consider the following functions on $[0,\infty]$.
\begin{align*}
\mathbf{d}_\mathbf{F}(r) &=\sup_{x\in X}\sup_{F\in[x_\bullet^r]^{<\omega}}\inf_{F\leq^\mathbf{d}y}\mathbf{d}(y,x).\\
\mathbf{d}_\Phi(r) &=\sup_{x\in X}\sup_{F\in[x_\bullet^r]^{<\omega}}\sup_{\mathbin{\preceq}\in\Phi^\mathbf{d}}\inf_{F\preceq y}\mathbf{d}(y,x).
\end{align*}
\begin{align}
\text{So}\quad\mathbf{d}_\mathbf{F}\leq\mathbf{I}\quad&\Leftrightarrow\quad\mathbf{d}_\mathbf{F}[0,r)\subseteq[0,r),\text{ for all }r\in(0,\infty).\nonumber\\
\quad&\Leftrightarrow\quad x_\bullet^r\text{ is $\leq^\mathbf{d}$-directed, for all }x\in X\text{ and }r\in(0,\infty).\label{xbr}\\
\quad&\Leftrightarrow\quad\sup_{y\in F}\mathbf{d}(y,x)=\inf_{F\leq^\mathbf{d}y}\mathbf{d}(y,x),\text{ for all }x\in X\text{ and finite }F\subseteq X.\nonumber
\end{align}
In general, $\mathbf{d}_\Phi\leq\mathbf{d}_\mathbf{F}\leq\mathbf{d}_\bullet$, but $\mathbf{d}_\mathbf{F}$ can be much smaller than $\mathbf{d}_\bullet$.  For example, if $X=\mathrm{c}_0(\mathbb{R})=\{f\in\mathbb{R}^\mathbb{N}:f(n)\rightarrow0\}$ and $\mathbf{d}(f,g)=\sup(f(n)-g(n))_+$ then $\mathbf{d}_\mathbf{F}\leq\mathbf{I}$ even though $\mathbf{d}_\bullet(r)=\infty$, for all $r>0$.  However, $\mathbf{d}_\mathbf{F}$ and $\mathbf{d}_\Phi$ often coincide.

\begin{prp}\label{dFdF}
If $\mathbf{d}$ is a hemimetric, $X$ is $\mathbf{d}^\vee\!$-complete and $\mathbf{d}_\Phi\precapprox\mathbf{I}$ then $\mathbf{d}_\mathbf{F}=\mathbf{d}_\Phi$.
\end{prp}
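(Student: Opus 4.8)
The inequality $\mathbf{d}_\Phi\le\mathbf{d}_\mathbf{F}$ always holds (as noted just before the statement), so the plan is to prove the reverse inequality $\mathbf{d}_\mathbf{F}\le\mathbf{d}_\Phi$. Fix $r\in(0,\infty)$ with $\mathbf{d}_\Phi(r)<\infty$ (otherwise there is nothing to prove at that $r$), together with $x\in X$, a finite $F\subseteq x_\bullet^r$ and $\eta>0$. Unwinding the definition of $\mathbf{d}_\mathbf{F}$, it suffices to produce a single $y\in X$ with $F\leq^\mathbf{d}y$ and $\mathbf{d}(y,x)\le\mathbf{d}_\Phi(r)+\eta$; taking the infimum over such $y$ and then the suprema over $F$ and $x$ will yield $\mathbf{d}_\mathbf{F}(r)\le\mathbf{d}_\Phi(r)$.

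Such a $y$ will be obtained as a limit. Using $\mathbf{d}_\Phi\precapprox\mathbf{I}$, i.e. $\lim_{s\to0}\mathbf{d}_\Phi(s)=0$, I would first set $\delta_0=r$ and then choose $\delta_n\downarrow0$ and $\epsilon_n>0$ for $n\ge1$ so small that $\sum_{n\ge1}\delta_n$, $\sum_{n\ge1}\mathbf{d}_\Phi(\delta_n)$ and $\sum_{n\ge1}\epsilon_n$ are all finite with $\epsilon_1+\sum_{n\ge1}(\mathbf{d}_\Phi(\delta_n)+\epsilon_{n+1})\le\eta$. Setting $y_0=x$, I would then recursively construct $(y_n)$ so that $\mathbf{d}(w,y_n)<\delta_n$ for every $w\in F$. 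Given $y_n$ with this property, the key observation is that $F\cup\{y_n\}\subseteq(y_n)_\bullet^{\delta_n}$: membership of $F$ is the inductive hypothesis, while $\mathbf{d}(y_n,y_n)=0<\delta_n$ is exactly where the hemimetric hypothesis is used. Applying the definition of $\mathbf{d}_\Phi(\delta_n)$ to this finite subset of $(y_n)_\bullet^{\delta_n}$ with the relation $<^\mathbf{d}_{\delta_{n+1}}\in\Phi^\mathbf{d}$, I can pick $y_{n+1}$ satisfying $F\cup\{y_n\}<^\mathbf{d}_{\delta_{n+1}}y_{n+1}$ and $\mathbf{d}(y_{n+1},y_n)\le\mathbf{d}_\Phi(\delta_n)+\epsilon_{n+1}$; that is, $\mathbf{d}(w,y_{n+1})<\delta_{n+1}$ for $w\in F$, together with the two bounds $\mathbf{d}(y_n,y_{n+1})<\delta_{n+1}$ and $\mathbf{d}(y_{n+1},y_n)\le\mathbf{d}_\Phi(\delta_n)+\epsilon_{n+1}$.

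The decisive point — and the step I expect to be the main obstacle — is that $\mathbf{d}_\Phi$ only controls the backward distances $\mathbf{d}(y_{n+1},y_n)$, whereas $\mathbf{d}^\vee$-completeness needs a genuinely two-sided Cauchy sequence. This is resolved by having folded the current centre $y_n$ into the finite set over which $y_{n+1}$ is chosen, which forces the forward bound $\mathbf{d}(y_n,y_{n+1})<\delta_{n+1}$. Summing consecutive distances and invoking the three summability choices then gives $\mathbf{d}(y_m,y_n)\le\sum_{k\ge n}(\mathbf{d}_\Phi(\delta_k)+\epsilon_{k+1})$ and $\mathbf{d}(y_n,y_m)\le\sum_{k\ge n}\delta_{k+1}$ for $m>n$, both tending to $0$, so $(y_n)$ is $\mathbf{d}^\vee$-Cauchy. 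By $\mathbf{d}^\vee$-completeness and \eqref{arrowc} applied to the symmetric metric $\mathbf{d}^\vee$ (for which the $\mathbf{d}^\vee$-Cauchy, hence $\mathbf{d}^\vee$-reflexive, sequence $(y_n)$ qualifies), there is $y\in X$ with $\mathbf{d}^\vee(y_n,y)\to0$, hence $\mathbf{d}(y_n,y)\to0$ and $\mathbf{d}(y,y_n)\to0$. Finally $\mathbf{d}(w,y)\le\mathbf{d}(w,y_n)+\mathbf{d}(y_n,y)<\delta_n+\mathbf{d}(y_n,y)\to0$ gives $F\leq^\mathbf{d}y$, and since $\mathbf{d}(y_n,x)\le\mathbf{d}_\Phi(r)+\epsilon_1+\sum_{k\ge1}(\mathbf{d}_\Phi(\delta_k)+\epsilon_{k+1})\le\mathbf{d}_\Phi(r)+\eta$ for every $n$, letting $n\to\infty$ in $\mathbf{d}(y,x)\le\mathbf{d}(y,y_n)+\mathbf{d}(y_n,x)$ yields $\mathbf{d}(y,x)\le\mathbf{d}_\Phi(r)+\eta$, as required.
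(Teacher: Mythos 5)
Your proposal is correct and takes essentially the same route as the paper's proof: both iterate $\mathbf{d}_\Phi$ at scales shrinking to $0$, use the hemimetric property to fold the current centre into the finite set (which is exactly what forces the forward bound and makes the constructed sequence $\mathbf{d}^\vee\!$-Cauchy), and then invoke $\mathbf{d}^\vee\!$-completeness to extract $y$ with $F\leq^\mathbf{d}y$ and $\mathbf{d}(y,x)\leq\mathbf{d}_\Phi(r)+\eta$, giving $\mathbf{d}_\mathbf{F}\leq\mathbf{d}_\Phi$. The only (immaterial) difference is bookkeeping: the paper carries the whole history $F\cup\{x,x_1,\dots,x_{n-1}\}$ in each finite set so that forward distances to the newest point are controlled directly, whereas you carry only $F\cup\{y_n\}$ and recover the forward Cauchy estimate by chaining the triangle inequality over your summable constants.
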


\begin{proof}
For any $r\in[0,\infty]$, $x\in X$, finite $F\subseteq x^r_\bullet$ and $\epsilon>0$, we take $(\epsilon_n)$ with $0<\epsilon_n<2^{-n}\epsilon$ and $\mathbf{d}_\Phi(\epsilon_n)<2^{-n}\epsilon$, for all $n\in\mathbb{N}$.  Now take $x_1\in X$ with $\mathbf{d}(x_1,x)<\mathbf{d}_\Phi(r)$ and $\sup_{z\in F\cup\{x\}}\mathbf{d}(z,x_1)<\epsilon_1$.  We can then take $x_2\in X$ with $\mathbf{d}(x_2,x_1)<\mathbf{d}_\Phi(\epsilon_1)<\frac{1}{2}\epsilon$ and $\sup_{z\in F\cup\{x,x_1\}}\mathbf{d}(z,x_2)<\epsilon_2$.  Continuing in this way we obtain $(x_n)$ with $\mathbf{d}^\vee(x_{n+1},x_n)<2^{-n}\epsilon$, for all $n\in\mathbb{N}$.  As $X$ is $\mathbf{d}^\vee$-complete, we have $y\in X$ with $\mathbf{d}^\vee(x_n,y)\rightarrow0$ so $F\leq^\mathbf{d}y$ and $\mathbf{d}(y,x)<\mathbf{d}_\Phi(r)+\epsilon$ so $\mathbf{d}_\mathbf{F}\leq\mathbf{d}_\Phi$.
\end{proof}

\begin{thm}\label{CDS}
If $X$ is $\leq^\mathbf{d}$-$\mathbf{d}$-complete and $\mathbf{d}_\mathbf{F}\leq\mathbf{I}$ then we have $\mathbf{d}^\vee\!$-Cauchy $(y_n)$ with
\[(x_\lambda)\mathbf{d}=(y_n)\mathbf{d}\qquad\text{and}\qquad\lim_{\lambda,n}\mathbf{d}(x_\lambda,y_n)=0.\]
\end{thm}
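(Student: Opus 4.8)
The plan is to build each $y_n$ as a $\mathbf{d}$-supremum that dominates the whole tail of $(x_\lambda)$ past a Cauchy threshold, so that the two required identities reduce to soft estimates. Fix $\epsilon_n\downarrow0$ and, using that $(x_\lambda)$ is $\mathbf{d}$-Cauchy, increasing thresholds $\alpha_n$ with $\mathbf{d}(x_\gamma,x_\delta)<\epsilon_n$ whenever $\alpha_n\prec\gamma\prec\delta$; write $\Lambda_n=\{\lambda:\lambda\succ\alpha_n\}$. I aim, for each $n$, for an element $y_n$ with $x_\mu\leq^\mathbf{d}y_n$ for all $\mu\in\Lambda_n$ (tail domination) and with drift $\beta_n\to0$ bounding the distance from $y_n$'s building blocks to their net centres. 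Granting this, $(y_n)\mathbf{d}=(x_\lambda)\mathbf{d}$ is immediate: tail domination gives $\mathbf{d}(x_\mu,z)\leq\mathbf{d}(y_n,z)$ for $\mu\in\Lambda_n$, hence $(x_\lambda)\mathbf{d}\leq\mathbf{d}(y_n,\cdot)$, while the defining property $\mathbf{d}(y_n,z)=\sup_{u\in D_n}\mathbf{d}(u,z)$ of a $\mathbf{d}$-supremum, together with the \emph{uniform} forward bound $\mathbf{d}(x_\mu,z)\leq(x_\lambda)\mathbf{d}(z)+\epsilon_n$ (valid for all $\mu\in\Lambda_n$ and \emph{all} $z$, by the Cauchy estimate as in \autoref{Clim}), gives $\mathbf{d}(y_n,z)\leq(x_\lambda)\mathbf{d}(z)+\beta_n+\epsilon_n$. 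Likewise $\lim_{\lambda,n}\mathbf{d}(x_\lambda,y_n)=0$ follows by bridging: given $\lambda\succ\alpha_N$ and $n\geq N$, choose $\delta\succ\lambda$ in $\Lambda_n$, so that $\mathbf{d}(x_\lambda,y_n)\leq\mathbf{d}(x_\lambda,x_\delta)+\mathbf{d}(x_\delta,y_n)<\epsilon_N+0$.

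\textbf{Construction of $D_n$ (dual to \autoref{Cauchytodirected}).} The construction of $y_n$ dualises \autoref{Cauchytodirected}, replacing lower bounds of upper balls by \emph{$\mathbf{d}$-supremums of open lower balls}. Since $\mathbf{d}_\mathbf{F}\leq\mathbf{I}$, every open lower ball is $\leq^\mathbf{d}$-directed by \eqref{xbr}, so by $\leq^\mathbf{d}$-$\mathbf{d}$-completeness (\autoref{edcomplete}) it has a $\mathbf{d}$-supremum. For fixed $n$ I would define $f:[\Lambda_n]^{<\omega}\to\Lambda_n$ recursively as in \autoref{Cauchytodirected} and \autoref{preCauchysub}, with $f(\{\mu\})=\mu$, $f(E)\prec f(F)$ for $E\subsetneq F$, and $\sup_{f(F)\prec\lambda}\mathbf{d}(x_{f(F)},x_\lambda)<r_{n,|F|}$ for a summable sequence with $\sum_k r_{n,k}<\sigma_n$, and then set $u_F=\mathbf{d}\text{-}\sup(x_{f(F)})_\bullet^{R_{n,|F|}}$ using the \emph{increasing but bounded} radii $R_{n,k}=\epsilon_n+\sum_{j<k}r_{n,j}\leq\beta_n:=\epsilon_n+\sigma_n$. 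The two properties of a $\mathbf{d}$-supremum give $\mathbf{d}(u_F,x_{f(F)})\leq R_{n,|F|}\leq\beta_n$ and, for $E\subsetneq F$, $\mathbf{d}(u_E,x_{f(F)})\leq\mathbf{d}(u_E,x_{f(E)})+\mathbf{d}(x_{f(E)},x_{f(F)})<R_{n,|E|}+r_{n,|E|}=R_{n,|E|+1}\leq R_{n,|F|}$, so $u_E$ lies in the ball and $u_E\leq^\mathbf{d}u_F$. Thus $D_n=\{u_F\}$ is $\leq^\mathbf{d}$-directed, $y_n=\mathbf{d}\text{-}\sup D_n$ exists by \autoref{edcomplete}, and tail domination holds since each $x_\mu$ (with $\mu\in\Lambda_n$) lies in the level-one ball $(x_\nu)_\bullet^{\epsilon_n}$ for any $\nu\succ\mu$ in $\Lambda_n$, whence $x_\mu\leq^\mathbf{d}u_{\{\nu\}}\leq^\mathbf{d}y_n$.

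\textbf{The $\mathbf{d}^\vee$-Cauchy estimate.} It remains to verify that $(y_n)$ is $\mathbf{d}^\vee$-Cauchy, which is where the asymmetry must be confronted. For $m\geq n$ we have $\alpha_m\succeq\alpha_n$, so every centre $x_{f(F)}$ of $D_m$ lies in $\Lambda_n$ and is therefore dominated by $y_n$; using $\mathbf{d}(y_m,y_n)=\sup_{u\in D_m}\mathbf{d}(u,y_n)$ this yields $\mathbf{d}(y_m,y_n)\leq\beta_m$. In the reverse direction a centre $x_{f(F)}$ of $D_n$ need not lie in $\Lambda_m$, so I bridge as above through some $\delta\succ f(F)$ in $\Lambda_m$ to get $\mathbf{d}(x_{f(F)},y_m)\leq\epsilon_n$ and hence $\mathbf{d}(y_n,y_m)\leq\beta_n+\epsilon_n$. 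Therefore $\mathbf{d}^\vee(y_m,y_n)\leq\beta_n+\epsilon_n\to0$, as required.

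\textbf{The main obstacle.} The crux is precisely this asymmetry. Because $(x_\lambda)$ is only \emph{forward} Cauchy, the backward distances $\mathbf{d}(y_m,y_n)$ (from a later to an earlier term) cannot be read off the net directly; they are controlled only by forcing the earlier $y_n$ to absorb the \emph{entire} tail $\Lambda_n$, which is why $y_n$ must be an infinite $\mathbf{d}$-supremum rather than a single ball-supremum. Keeping the drift $\beta_n$ small while doing so is delicate: in contrast to \autoref{Cauchytodirected}, where the ball radii \emph{shrink} along $\subsetneq$, here they must \emph{grow} (to reabsorb the drift of the earlier $u_E$), and boundedness of the drift is recovered only by letting the forward gaps $r_{n,|F|}$ decay summably. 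Arranging these two competing monotonicities to coexist is the technical heart; once it is done, the estimate above gives $\mathbf{d}^\vee$-Cauchyness for free, and combining $(y_n)$ with $\mathbf{d}^\vee$-completeness later yields a genuine limit $y$ with $y\mathbf{d}=(x_\lambda)\mathbf{d}$ and $\mathbf{d}(x_\lambda,y)\to0$, i.e. $x_\lambda\carrowc y$ by \eqref{carrowc}.
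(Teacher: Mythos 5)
Your proposal is correct in substance but takes a genuinely different route from the paper's proof. The paper never takes suprema of whole balls: it uses $\mathbf{d}_\mathbf{F}$ \emph{quantitatively}, as a selection principle producing a single element that dominates a finite set of previously constructed points while staying close to the current centre, and so builds one $\leq^\mathbf{d}$-increasing family $(y^n_F)$ per $n$ by recursion on $|F|$, invoking $\leq^\mathbf{d}$-$\mathbf{d}$-completeness just once per $n$; the radii are kept below $r_n$ by the telescoping choice $\epsilon^m_n=\tfrac{1}{2}(r^m_n-\mathbf{d}_\mathbf{F}(r^{m-1}_n))$. You instead use $\mathbf{d}_\mathbf{F}\leq\mathbf{I}$ \emph{qualitatively} through \eqref{xbr}: open lower balls are $\leq^\mathbf{d}$-directed, hence have $\mathbf{d}$-suprema $u_F$ by $\leq^\mathbf{d}$-$\mathbf{d}$-completeness; these are automatically $\leq^\mathbf{d}$-increasing because each $u_E$ lies inside the later ball, and a second $\mathbf{d}$-supremum yields $y_n$. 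What your route buys is full tail domination ($x_\mu\leq^\mathbf{d}y_n$ for \emph{every} $\mu\in\Lambda_n$, via the singleton balls), which reduces both identities and both halves of the $\mathbf{d}^\vee$-Cauchy estimate to one-line triangle inequalities, whereas the paper dominates only the selected centres $x_{f(G)}$ and its estimates are correspondingly more delicate. What it costs is generality: the paper's proof deliberately runs under the weaker hypothesis \eqref{weakcond}, where well-behaved radii only accumulate at $0$, and your construction is incompatible with that since your radii $R_{n,k}$ must increase in $k$; your argument also tacitly needs the balls to be nonempty, which does follow from $\mathbf{d}_\mathbf{F}\leq\mathbf{I}$ (take $F=\emptyset$ in its definition) but deserves a remark, since empty sets are not $\leq^\mathbf{d}$-directed.

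One detail needs repair. On singletons $f$ is forced to be the identity, so you cannot arrange $\sup_{f(F)\prec\lambda}\mathbf{d}(x_{f(F)},x_\lambda)<r_{n,1}$ for a freely chosen summably small $r_{n,1}$: the only available bound there is the Cauchy threshold $\epsilon_n$. Consequently, for $|E|=1$ your containment estimate $\mathbf{d}(u_E,x_{f(F)})<R_{n,1}+r_{n,1}=R_{n,2}$ fails whenever $r_{n,1}<\epsilon_n$, the true bound being $R_{n,1}+\epsilon_n=2\epsilon_n$. This is purely a bookkeeping slip: set $r_{n,1}:=\epsilon_n$ (so $R_{n,2}=2\epsilon_n$) and keep $\sum_{k\geq2}r_{n,k}$ small; then $\beta_n\leq2\epsilon_n+\sigma_n\rightarrow0$ and every other step of your argument goes through unchanged.
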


\begin{proof}
Instead of $\mathbf{d}_\mathbf{F}\leq\mathbf{I}$, we can work with a slightly even weaker condition
\begin{equation}\label{weakcond}
0\in\overline{\{r\in(0,\infty):\mathbf{d}_\mathbf{F}[0,r)\subseteq[0,r)\}},
\end{equation}
which means we have $r_n\downarrow0$ with $\mathbf{d}_\mathbf{F}[0,r_n)\subseteq[0,r_n)$, for all $n\in\mathbb{N}$.  Then we have $(r^m_n)$ with $\mathbf{d}_\mathbf{F}(r^m_n)<r^{m+1}_n<r_n$, for all $m\in\mathbb{N}$ (taking $\mathbf{d}_\mathbf{F}(r_n^0)=0$).  Set
\[\epsilon_n^m=\tfrac{1}{2}(r_n^m-\mathbf{d}_\mathbf{F}(r_n^{m-1})).\]

Again define a map $f:[\Lambda]^{<\omega}\rightarrow\Lambda$ such that, for all $\lambda\in\Lambda$, $f(\{\lambda\})=\lambda$, for all $F\in[\Lambda]^{<\omega}$ with $|F|>1$ and all $E\subsetneqq F$, $f(E)\prec f(F)$ and
\[\sup_{f(F)\prec\lambda}\mathbf{d}(x_{f(F)},x_\lambda)<\min_{1\leq n<|F|}\epsilon_n^{|F|-n},\]

For any $n\in\mathbb{N}$, let $\Lambda_n=\{F\in[\Lambda]^{<\omega}:|F|>n\}$ and define $(y^n_F)_{F\in\Lambda_n}$ recursively as follows.  When $|F|=n+1$, let $y^n_F=x_{f(F)}$ so if $F\subsetneqq G$ then
\[\mathbf{d}(y^n_F,x_{f(G)})<\epsilon_n^1<r_n^1.\]
When $|G|=n+2$, we take $y^n_G$ with $y^n_F\leq^\mathbf{d}y^n_G$, for all $F\subsetneqq G$ with $|F|=n+1$, and
\[\mathbf{d}(y^n_G,x_{f(G)})<\mathbf{d}_\mathbf{F}(r_n^1)+\epsilon^2_n.\]
As $\mathbf{d}(x_{f(G)},x_{f(H)})<\epsilon^2_n$, whenever $G\subsetneqq H$ and $|G|=n+2$,
\[\mathbf{d}(y^n_G,x_{f(H)})\leq\mathbf{d}(y^n_G,x_{f(G)})+\mathbf{d}(x_{f(G)},x_{f(H)})<\mathbf{d}_\mathbf{F}(r_n^1)+2\epsilon^2_n=r_n^2.\]
For $|H|=n+3$, take $y^n_H$ with $y^n_G,x_{f(G)}\leq^\mathbf{d}y^n_H$, for $G\subsetneqq H$ with $|G|=n+2$, and
\[\mathbf{d}(y^n_H,x_{f(H)})<\mathbf{d}_\mathbf{F}(r_n^2)+\epsilon^3_n.\]

Continuing in this way we obtain increasing $(y^n_F)$ with $\mathbf{d}(y^n_F,x_{f(F)})<r_n$ and $x_{f(F)}\leq^\mathbf{d}y^n_G$, for all $F\in\Lambda_{n+1}$ and $F\subsetneqq G$.  As $X$ is $\leq^\mathbf{d}$-$\mathbf{d}$-complete, $(y^n_F)$ has $\mathbf{d}$-supremum $y^n$.  For all $m,n\in\mathbb{N}$ and $F\in\Lambda_{\max(m,n)+1}$, we have
\[\mathbf{d}(y^m_F,y^n)\leq\mathbf{d}(y^m_F,y^n_F)\leq\mathbf{d}(y^m_F,x_{f(F)})<r_m\]
and hence $\mathbf{d}(y^m,y^n)\leq r_m$, so $(y^n)$ is $\mathbf{d}^\vee$-Cauchy.
For any $\epsilon>0$, we have $r_n<\epsilon$ for all sufficiently large $n\in\mathbb{N}$.  Then, for any $z\in X$ and all sufficiently large $F\in[\Lambda]^{<\omega}$,
\[\mathbf{d}(y^n,z)\leq\mathbf{d}(y^n_F,z)+\epsilon\leq\mathbf{d}(x_{f(F)},z)+r_n+\epsilon<\mathbf{d}(x_{f(F)},z)+2\epsilon,\]
so $(y^n)\mathbf{d}\leq(x_\lambda)\mathbf{d}$.  For all sufficiently large $F\in[\Lambda]^{<\omega}$, $\sup_{f(F)\prec\lambda}\mathbf{d}(x_{f(F)},x_\lambda)<\epsilon$ so, as $x_{f(G)}\leq^\mathbf{d}y^n$ when $F\subsetneqq G\in\Lambda_n$,
$\mathbf{d}(x_{f(F)},y^n)<\epsilon$ and $\lim\limits_{\lambda,n}\mathbf{d}(x_\lambda,y^n)=0$.
\end{proof}

Above we obtained symmetric $\mathbf{d}^\vee$ and transitive $\leq^\mathbf{d}$ from $\mathbf{d}$.  But in practice it often happens the other way around, i.e. we compose symmetric $\mathbf{e}$ with transitive $\preceq$ to obtain $\mathbf{d}=\mathbf{e}\circ\mathbin{\preceq}$ (\eqref{tri} is not automatic but follows from e.g. $\mathbf{e}\circ\mathbin{\preceq}=\mathbin{\preceq}\circ\mathbf{e}$).

\begin{qst}\label{mainq}
If $\mathbf{d}=\mathbf{e}\circ\mathbin{\leq^\mathbf{d}}$ for a metric $\mathbf{e}$ then does
\[\text{$\leq^\mathbf{d}$-$\mathbf{d}$-complete}\quad\text{and}\quad\text{$\mathbf{e}$-complete}\qquad\Rightarrow\qquad\text{$\mathbf{d}$-complete}?\]
\end{qst}

Unlike with \eqref{q1}, we do not know of a counterexample.  Indeed, an answer to \autoref{mainq} would likely shed some light on an old problem from \cite{AkemannPedersen1973} and \cite{Brown1988} for C*-algebra $A$, namely whether every strongly lower semicontinuous element of $A^{**}_\mathrm{sa}$ can be obtained from $A_\mathrm{sa}$ as a monotone limit.  However, we can give a positive answer to \autoref{mainq} if we assume $\mathbf{e}$-separability, i.e. $\mathbf{e}Y=0$ for some countable $Y\subseteq X$, or consider $\mathbf{d}$-$\mathbf{d}$-completeness instead of $\leq^\mathbf{d}$-$\mathbf{d}$-completeness.

Again we work with a weaker assumption than $\mathbf{d}=\mathbf{e}\circ\mathbin{\leq}^\mathbf{d}$ which depends only on $\Phi^\mathbf{d}$ and $\Phi^\mathbf{e}$.  Specifically, note $\mathbf{d}\precapprox\mathbf{e}\,\Leftrightarrow\,\Phi^\mathbf{d}\subseteq\Phi^\mathbf{e}$ and define
\[\mathbf{e}\circ\Phi^\mathbf{d}=\sup\limits_{\preceq\in\Phi^\mathbf{d}}\mathbf{e}\circ\mathbin{\preceq}=\sup\limits_{\preceq\in\Phi^\mathbf{d}}\inf\limits_{z\preceq y}\mathbf{e}(x,z)=\sup\limits_{\epsilon>0}\inf\limits_{z<^\mathbf{d}_\epsilon y}\mathbf{e}(x,z).\]

\begin{thm}\label{ded}
If $X$ is $\mathbf{e}$-complete and $\mathbf{e}\circ\Phi^\mathbf{d}\precapprox\mathbf{d}\precapprox\mathbf{e}=\mathbf{e}^\mathrm{op}$ then $\mathbf{e}\circ\Phi^\mathbf{d}=\mathbf{e}\circ\mathbin{\leq^\mathbf{d}}$,
\begin{equation}\label{dedeq}
Y\mathbf{d}=(x_\lambda)\mathbf{d}\qquad\text{and}\qquad\mathbf{d}Y=\mathbf{d}(x_\lambda),
\end{equation}
for $\mathbf{d}$-directed $Y\subseteq X$.  If $X$ is $\mathbf{e}$-separable then we can choose $Y$ to be $\leq^\mathbf{d}$-directed.
\end{thm}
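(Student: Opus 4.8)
The plan is to isolate one chain-construction and run it three times: once with a fixed target to get $\mathbf{e}\circ\Phi^\mathbf{d}=\mathbf{e}\circ\leq^\mathbf{d}$, once with a cofinal sequence of targets to get the $\mathbf{d}$-directed $Y$, and once monotonically for the $\mathbf{e}$-separable refinement. The engine is that $\mathbf{e}\circ\Phi^\mathbf{d}\precapprox\mathbf{d}$ converts ``$\mathbf{d}(s,t)$ small'' into ``for any tolerance there is $w$ with $\mathbf{d}(w,t)$ as small as we please and $\mathbf{e}(s,w)$ within that tolerance of $\mathbf{e}\circ\Phi^\mathbf{d}(s,t)$''. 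Concatenating such $w$ produces an $\mathbf{e}$-Cauchy sequence $(w_k)$ whose $\mathbf{e}$-limit $z$ exists by $\mathbf{e}$-completeness and satisfies $\mathbf{e}(w_k,z)\to0$ (the $X^\circ_\circ$-limit is in particular an $\arrowc$-limit, so \eqref{arrowc} applies); as $\mathbf{e}=\mathbf{e}^\mathrm{op}$ also $\mathbf{e}(z,w_k)\to0$. The point of $\mathbf{d}\precapprox\mathbf{e}$ is that such $\mathbf{e}$-limits preserve genuine order: from $u\leq^\mathbf{d}w_k$ we get $\mathbf{d}(u,z)\leq\mathbf{d}(u,w_k)+\mathbf{d}(w_k,z)\to0$, i.e. $u\leq^\mathbf{d}z$, and from $\mathbf{d}(w_k,t)\to0$ we get $\mathbf{d}(z,t)=0$. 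Everything below is bookkeeping of these facts, in the spirit of \autoref{dFdF}.

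First I would prove the identity. The inequality $\mathbf{e}\circ\Phi^\mathbf{d}\leq\mathbf{e}\circ\leq^\mathbf{d}$ is automatic, since $\leq^\mathbf{d}\,\subseteq\,<^\mathbf{d}_\epsilon$ for every $\epsilon>0$. For the reverse, fix $x,y$ with $c=\mathbf{e}\circ\Phi^\mathbf{d}(x,y)<\infty$ and $\epsilon>0$: pick $z_1$ with $\mathbf{d}(z_1,y)$ small and $\mathbf{e}(x,z_1)$ within $\epsilon$ of $c$, and recursively $z_{n+1}$ with $\mathbf{d}(z_{n+1},y)$ small and $\mathbf{e}(z_n,z_{n+1})$ barely above $\mathbf{e}\circ\Phi^\mathbf{d}(z_n,y)$. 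Bootstrapping the tolerances against $\mathbf{e}\circ\Phi^\mathbf{d}\precapprox\mathbf{d}$, exactly as in \autoref{dFdF}, forces $\mathbf{e}\circ\Phi^\mathbf{d}(z_n,y)\to0$ fast enough to keep $\mathbf{e}(x,z_1)+\sum_n\mathbf{e}(z_n,z_{n+1})\leq c+\epsilon$, so $(z_n)$ is $\mathbf{e}$-Cauchy with $\mathbf{e}$-limit $z$. The preservation facts give $z\leq^\mathbf{d}y$ and $\mathbf{e}(x,z)\leq c+\epsilon$, whence $\mathbf{e}\circ\leq^\mathbf{d}(x,y)\leq c$.

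Next I would build the $\mathbf{d}$-directed $Y$, and here the identity is not even needed. Since $(x_\lambda)$ is $\mathbf{d}$-Cauchy, for every $\eta>0$ there is $\alpha$ with $\sup_{\lambda\succ\gamma}\mathbf{d}(x_\gamma,x_\lambda)\leq\eta$ for all $\gamma\succ\alpha$. For each such late $\gamma$ I run the chain along a cofinal $\delta_1\prec\delta_2\prec\cdots$ chosen with $\mathbf{d}(x_{\delta_k},x_{\delta_{k+1}})$ summably small: take $w_1$ with $\mathbf{d}(w_1,x_{\delta_1})$ small and $\mathbf{e}(x_\gamma,w_1)$ barely above $\mathbf{e}\circ\Phi^\mathbf{d}(x_\gamma,x_{\delta_1})$, and $w_{k+1}$ with $\mathbf{d}(w_{k+1},x_{\delta_{k+1}})$ small and $\mathbf{e}(w_k,w_{k+1})$ barely above $\mathbf{e}\circ\Phi^\mathbf{d}(w_k,x_{\delta_{k+1}})$ — legitimate because $\mathbf{d}(w_k,x_{\delta_{k+1}})\leq\mathbf{d}(w_k,x_{\delta_k})+\mathbf{d}(x_{\delta_k},x_{\delta_{k+1}})$ is small, so these $\mathbf{e}\circ\Phi^\mathbf{d}$-values are summably small by $\mathbf{e}\circ\Phi^\mathbf{d}\precapprox\mathbf{d}$. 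The $\mathbf{e}$-limit $y_\gamma$ then has $\mathbf{d}(x_\gamma,y_\gamma)$ small (via $\mathbf{d}\precapprox\mathbf{e}$) and, crucially, $y_\gamma\leq^\mathbf{d}(x_\lambda)$: for $\lambda\succ\delta_k$ we have $\mathbf{d}(w_k,x_\lambda)\leq\mathbf{d}(w_k,x_{\delta_k})+\mathbf{d}(x_{\delta_k},x_\lambda)$, so $\sup_{\lambda\succ\delta_k}\mathbf{d}(w_k,x_\lambda)\to0$ and hence $\mathbf{d}(y_\gamma,x_\lambda)\to0$. Arranging $\mathbf{d}(x_\gamma,y_\gamma)\to0$ as $\gamma$ runs cofinally, set $Y=\{y_\gamma\}$. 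Then $y_\gamma\leq^\mathbf{d}(x_\lambda)$ yields $Y\mathbf{d}\leq(x_\lambda)\mathbf{d}$ and $\mathbf{d}Y\geq\mathbf{d}(x_\lambda)$, while $\mathbf{d}(x_\gamma,y_\gamma)\to0$ and the triangle inequality yield the reverse inequalities, giving \eqref{dedeq}; and $Y$ is $\mathbf{d}$-directed, since for finite $F\subseteq Y$ each $f$ has $\mathbf{d}(f,x_\gamma)\to0$, so $\mathbf{d}(f,y_\gamma)\leq\mathbf{d}(f,x_\gamma)+\mathbf{d}(x_\gamma,y_\gamma)\to0$ and $\inf_{y\in Y}F\mathbf{d}(y)=0$.

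Finally, for the $\mathbf{e}$-separable case I would instead produce a single genuinely $\leq^\mathbf{d}$-increasing sequence $(y_n)$, which is automatically $\leq^\mathbf{d}$-directed; the preservation fact shows an $\mathbf{e}$-limit stays above a fixed point, so it suffices that every term of the chain defining $y_{n+1}$ satisfy $y_n\leq^\mathbf{d}w_k$, forcing $y_n\leq^\mathbf{d}y_{n+1}$. The hard part will be exactly this: the hypotheses only furnish approximate lower bounds of a single target and say nothing a priori about the extra constraint $y_n\leq^\mathbf{d}w$, so the chain elements must be chosen genuinely above $y_n$, approximately below a late $x_\delta$, and $\mathbf{e}$-close to their predecessor all at once. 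This is where $\mathbf{e}$-separability should enter: fixing a countable $\mathbf{e}$-dense $D$, one can diagonalize over $D$ so that the successive $\mathbf{e}$-Cauchy selections respect the finitely many order constraints accumulated so far, meeting the countably many requirements (below the tail, above all earlier $y_m$, and $\mathbf{d}$-cofinal to the net) in the limit. Verifying that $y_m\leq^\mathbf{d}y_{m+1}$ can always be maintained without spoiling $\mathbf{e}$-closeness is the delicate step; granting it, $Y=\{y_n\}$ inherits \eqref{dedeq} exactly as above.
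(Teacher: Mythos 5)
Your first two parts are essentially correct and follow the same route as the paper: the identity $\mathbf{e}\circ\Phi^\mathbf{d}=\mathbf{e}\circ\mathbin{\leq^\mathbf{d}}$ via a bootstrapped chain whose $\mathbf{e}$-limit exists by $\mathbf{e}$-completeness and lands below $y$ because $\mathbf{d}\precapprox\mathbf{e}=\mathbf{e}^\mathrm{op}$, and then one such chain per late index $\gamma$ to produce the $\mathbf{d}$-directed $Y=\{y_\gamma\}$. Your only real deviation is that you build each $y_\gamma$ from approximate lower bounds (using $\mathbf{e}\circ\Phi^\mathbf{d}\precapprox\mathbf{d}$ directly), where the paper first upgrades to genuine lower bounds $x^{n+1}_\lambda\leq^\mathbf{d}x_{\gamma^{n+1}_\lambda}$ via the identity it has just proved; both work here, since only $\mathbf{d}(y_\gamma,x_\lambda)\rightarrow0$ is needed rather than an exact order relation. (One terminological slip: in a general directed set $\Lambda$ you cannot choose $\delta_1\prec\delta_2\prec\cdots$ cofinal; fortunately your argument never uses cofinality, only that each $\delta_k$ is chosen late enough for the Cauchy modulus.)

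The third part is a genuine gap, and you flag it yourself: ``granting'' that $y_m\leq^\mathbf{d}y_{m+1}$ can be maintained is precisely the content of the separable case. There are two concrete problems. First, your sketch keeps the net as the target, but then the requirements to be met are not countable: no sequence can reproduce $(x_\lambda)\mathbf{d}$ for an uncountably-cofinal net (take $X=\omega_1$ with $\mathbf{d}$ the characteristic function of $\leq$ and $x_\alpha=\alpha$; any countable family of indices $\lambda_n$ has an upper bound $z<\omega_1$, so $\sup_n\mathbf{d}(x_{\lambda_n},z)=0$ while $\lim_\lambda\mathbf{d}(x_\lambda,z)=\infty$). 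So separability must first be spent on a reduction your sketch never performs: the paper notes $\mathbf{e}$ is then a pseudometric, replaces the $Y$ of part two by a countable subset, and extracts from it a $\mathbf{d}$-Cauchy \emph{sequence} $(x_n)\subseteq Y$ with $Y\leq^\mathbf{d}(x_n)$; a countable $\mathbf{e}$-dense set plays no further role. Second, the obstacle you grant away, namely choosing chain elements \emph{above} the previously constructed $y_n$, cannot be overcome by any diagonalization, because every tool the hypotheses supply produces points below a target and $\mathbf{e}$-close to a source; nothing ever produces a point above a given one. The paper's resolution is to never need such a point: it builds a two-dimensional array, $y^1_n=x_n$ and recursively $y^{m+1}_n\leq^\mathbf{d}y^m_{n+1}$ with $\mathbf{e}(y^m_n,y^{m+1}_n)$ summably small, which is possible with $\mathbf{e}$-control because the part-one identity gives $\mathbf{e}\circ\mathbin{\leq^\mathbf{d}}\precapprox\mathbf{d}$; every order constraint points downward to a fresh choice. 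Taking columnwise $\mathbf{e}$-limits $y_n$, the increasing relation emerges diagonally in the limit, since
\[\mathbf{d}(y_n,y_{n+1})\leq\liminf_m\bigl(\mathbf{d}(y_n,y^{m+1}_n)+\mathbf{d}(y^{m+1}_n,y^m_{n+1})+\mathbf{d}(y^m_{n+1},y_{n+1})\bigr)=0,\]
the middle term vanishing identically and the outer ones by $\mathbf{d}\precapprox\mathbf{e}=\mathbf{e}^\mathrm{op}$. Without this idea (or some substitute for it), your treatment of the separable case is not a proof.
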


\begin{proof}
For $\mathbf{e}\circ\Phi^\mathbf{d}=\mathbf{e}\circ\mathbin{\leq^\mathbf{d}}$, we argue as in the proof of \autoref{dFdF}. Specifically, for any $x,y\in X$ and $\epsilon>0$, take $\epsilon_n\downarrow0$ with $^{\mathbf{e}\circ\Phi^\mathbf{d}}\!/\!_\mathbf{d}(\epsilon_n)<2^{-n}\epsilon$, for all $n\in\mathbb{N}$.  Now take $z_1\in X$ with $\mathbf{e}(x,z_1)<\mathbf{e}\circ\Phi^\mathbf{d}(x,y)+\epsilon$ and $\mathbf{d}(z_1,y)<\epsilon_1$.  Thus $\mathbf{e}\circ\Phi^\mathbf{d}(z_1,y)<\frac{1}{2}\epsilon$ and we can take $z_2\in X$ such that $\mathbf{e}(z_1,z_2)<\frac{1}{2}\epsilon$ and $\mathbf{d}(z_2,y)<\epsilon_2$.  Continuing in this way we obtain a sequence $(z_n)\subseteq X$ such that, for all $n\in\mathbb{N}$,
\[\mathbf{e}(z_n,z_{n+1})\leq2^{-n}\epsilon\qquad\text{and}\qquad\mathbf{d}(z_n,y)<\epsilon_n\rightarrow0.\]
As $X$ is $\mathbf{e}$-complete, $\mathbf{e}(z_n,z)\rightarrow0$, for some $z\in X$, so
\[\mathbf{e}(x,z)\leq\mathbf{e}(x,z_1)+\mathbf{e}(z_1,z)\leq\mathbf{e}\circ\Phi^\mathbf{d}(x,y)+2\epsilon\]
As $\mathbf{e}=\mathbf{e}^\mathrm{op}$, $\mathbf{e}(z,z_n)\rightarrow0$ so, as $\mathbf{d}\precapprox\mathbf{e}$, $\mathbf{d}(z,z_n)\rightarrow0$.  Then $z\leq^\mathbf{d}y$ follows from $\mathbf{d}(z,y)\leq\mathbf{d}(z,z_n)+\mathbf{d}(z_n,y)\rightarrow0$.  As $\epsilon>0$ was aribtrary, $\mathbf{e}\circ\mathbin{\leq^\mathbf{d}}=\mathbf{e}\circ\Phi^\mathbf{d}$.

As $(x_\lambda)$ is Cauchy, we can take a subnet and $(s_\lambda),(t_\lambda)\subseteq(0,\infty)$ such that
\begin{align*}
\sup_{\lambda<\delta}\mathbf{d}(x_\lambda,x_\delta)&<s_\lambda\rightarrow0.\\
^{\mathbf{e}\circ\mathbin{\leq^\mathbf{d}}}\!/\!_\mathbf{d}(s_\lambda)&<t_\lambda\rightarrow0.
\end{align*}
Define $\gamma_\lambda^n$ and $x_\lambda^n\leq^\mathbf{d}x_{\gamma_\lambda^n}$ recursively as follows.  First set $\gamma_\lambda^1=\lambda$ and $x_\lambda^1=x_\lambda$.  Then, for all $n\in\mathbb{N}$, take $\gamma_\lambda^{n+1}>\gamma_\lambda^n$ such that $^{\mathbf{e}\circ\mathbin{\leq^\mathbf{d}}}\!/\!_\mathbf{d}(s_{\gamma_\lambda^{n+1}}),s_{\gamma_\lambda^{n+1}}<2^{-n}t_\lambda$.  As $\mathbf{d}(x_\lambda^n,x_{\gamma_\lambda^{n+1}})\leq\mathbf{d}(x_{\gamma_\lambda^n},x_{\gamma_\lambda^{n+1}})<s_{\gamma_\lambda^n}$ and $^{\mathbf{e}\circ\mathbin{\leq^\mathbf{d}}}\!/\!_\mathbf{d}(s_{\gamma_\lambda^n})<2^{1-n}t_\lambda$, we can take $x^{n+1}_\lambda\leq^\mathbf{d}x_{\gamma_\lambda^{n+1}}$ such that $\mathbf{e}(x^n_\lambda,x^{n+1}_\lambda)<2^{1-n}t_\lambda$.  For each $\lambda$, $(x_\lambda^n)$ is $\mathbf{e}$-Cauchy so $\mathbf{e}$-completeness implies that $\mathbf{e}(x_\lambda^n,y_\lambda)\rightarrow0$, for some $y_\lambda\in X$.

For any $\lambda$ and $\epsilon>0$, we can take $n$ with $2^{1-n}t_\lambda<\epsilon$ so $\mathbf{e}(x_\lambda^n,y_\lambda)<2\epsilon$ and $\mathbf{d}(x_{\gamma_\lambda^n},x_\delta)<s_{\gamma_\lambda^n}<\epsilon$, for any $\delta\succ\gamma_\lambda^n$.  For all sufficiently large $\delta$, we also have $t_\delta<\epsilon$ so $\mathbf{e}(x_\delta,y_\delta)<2\epsilon$ and hence
\begin{align*}
\mathbf{d}(y_\lambda,y_\delta)&\leq\mathbf{d}(y_\lambda,x_\lambda^n)+\mathbf{d}(x_\lambda^n,x_{\gamma_\lambda^n})+\mathbf{d}(x_{\gamma_\lambda^n},x_\delta)+\mathbf{d}(x_\delta,y_\delta)\\
&\leq{}^\mathbf{d}\!/\!_\mathbf{e}(2\epsilon)+0+\epsilon+{}^\mathbf{d}\!/\!_\mathbf{e}(2\epsilon).
\end{align*}
As $\mathbf{d}\precapprox\mathbf{e}$, $Y=\{y_\lambda:\lambda\in\Lambda\}$ is $\mathbf{d}$-directed.  As $\mathbf{e}(x_\lambda,y_\lambda)<2t_\lambda\rightarrow0$, \eqref{dedeq} follows.

If $X$ is $\mathbf{e}$-separable then $\mathbf{e}$ is a pseudometric, as $\mathbf{e}=\mathbf{e}^\mathrm{op}$.  Thus $Y$ is also $\mathbf{e}$-separable and can be replaced by a countable subset.  Then we can replace $(x_\lambda)$ with a $\mathbf{d}$-Cauchy sequence $(x_n)\subseteq Y$ with $Y\leq^\mathbf{d}(x_n)$.

Take $(s^m_n),(t^m_n)\subseteq(0,\infty)$ such that, for all $m,n\in\mathbb{N}$,
\[s^m_n<2^{-m-n},\quad{}^\mathbf{d}\!/\!_\mathbf{e}(s^m_n)<t^m_{n-1}\quad\text{and}\quad ^{\mathbf{e}\circ\mathbin{\leq^\mathbf{d}}}\!/\!_\mathbf{d}(t^m_n)<s^{m+1}_n\]
(define and $(s^m_1)_{m\in\mathbb{N}}$ first then $(t^m_1)_{m\in\mathbb{N}}$, $(s^m_2)_{m\in\mathbb{N}}$ etc.).  Take a subsequence $(x_n)$ with $\mathbf{d}(x_n,x_{n+1})<t^1_n$, for all $n$, and define $y^m_n$ with $\mathbf{d}(y^m_n,y^m_{n+1})<t^m_n$, for all $m$, recursively as follows.  First let $y^1_n=x_n$, for all $n$.  Assume $y^m_n$ is defined for all $n$ and fixed $m$.  For each $n$, we can take $y^{m+1}_n\leq^\mathbf{d}y^m_{n+1}$ with $\mathbf{e}(y^m_n,y^{m+1}_n)<s^{m+1}_n$ as
\[\mathbf{e}\circ\mathbin{\leq^\mathbf{d}}(y^m_n,y^m_{n+1})\leq{}^{\mathbf{e}\circ\mathbin{\leq^\mathbf{d}}}\!/\!_\mathbf{d}(\mathbf{d}(y^m_n,y^m_{n+1}))\leq{}^{\mathbf{e}\circ\mathbin{\leq^\mathbf{d}}}\!/\!_\mathbf{d}(t^m_n)<s^{m+1}_n.\]
Thus $\mathbf{d}(y^{m+1}_n,y^{m+1}_{n+1})\leq\mathbf{d}(y^m_{n+1},y^{m+1}_{n+1})\leq{}^\mathbf{d}\!/\!_\mathbf{e}(\mathbf{e}(y^m_{n+1},y^{m+1}_{n+1}))\leq{}^\mathbf{d}\!/\!_\mathbf{e}(s^{m+1}_{n+1})<t^{m+1}_n$.

For all $m,n\in\mathbb{N}$, $\mathbf{e}(y^m_n,y^{m+1}_n)<s_n^{m+1}<2^{-m-n}$ so, as $X$ is $\mathbf{e}$-complete, we have $y_n\in X$ with $\lim_m\mathbf{e}(y^m_n,y_n)=0$.  As $\mathbf{d}\precapprox\mathbf{e}=\mathbf{e}^\mathrm{op}$ and $y^{m+1}_n\leq^\mathbf{d}y^m_{n+1}$,
\[\mathbf{d}(y_n,y_{n+1})\leq\liminf_m(\mathbf{d}(y_n,y^{m+1}_n)+\mathbf{d}(y^{m+1}_n,y^m_{n+1})+\mathbf{d}(y^m_{n+1},y_{n+1}))=0,\]
i.e. $y_n\leq^\mathbf{d}y_{n+1}$ so $Y=\{y_n:n\in\mathbb{N}\}$ is $\leq^\mathbf{d}$-directed.  Lastly, \eqref{dedeq} follows from
\[\mathbf{e}(x_n,y_n)=\lim_m\mathbf{e}(x_n,y^m_n)<{\textstyle\sum\limits_{m=2}^\infty}s^m_n<{\textstyle\sum\limits_{m=2}^\infty}2^{-m-n}<2^{-n}\rightarrow0.\qedhere\]
\end{proof}

\begin{cor}\label{Yc} $X$ is $\mathbf{d}$-complete if any of the following hold.
\begin{enumerate}
\item\label{Yc1a} $X$ is $\leq^\mathbf{d}$-$\mathbf{d}$-complete and $\mathbf{d}^\bullet\,\precapprox\mathbf{I}$.
\item\label{Yc1b} $X$ is $\leq^\mathbf{d}$-$\mathbf{d}$-complete, $\mathbf{d}^\vee\!$-complete and $\mathbf{d}_\mathbf{F}\leq\mathbf{I}$.
\item\label{Yc2a} $X$ is\hspace{10pt}$\mathbf{d}$-$\mathbf{d}$-complete, $\mathbf{e}$-complete and $\mathbf{e}\circ\Phi^\mathbf{d}\precapprox\mathbf{d}\precapprox\mathbf{e}=\mathbf{e}^\mathrm{op}$.
\item\label{Yc2b} $X$ is $\leq^\mathbf{d}$-$\mathbf{d}$-complete, $\mathbf{e}$-complete, $\mathbf{e}$-separable and $\mathbf{e}\circ\Phi^\mathbf{d}\precapprox\mathbf{d}\precapprox\mathbf{e}=\mathbf{e}^\mathrm{op}$.
\end{enumerate}
\end{cor}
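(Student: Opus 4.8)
The plan is to read the Corollary as a dispatcher: each of the four hypothesis packages is exactly what one of \autoref{Cauchytodirected}, \autoref{CDS} or \autoref{ded} consumes, so the real content already lives in those theorems and the Corollary is an assembly. Under the standing assumption fix a $\mathbf{d}$-Cauchy net $(x_\lambda)$; by \autoref{YC} I only need to produce $x\in X$ with $x_\lambda\carrowc x$. Recall from the Supremums subsection that for pre-Cauchy nets $x_\lambda\carrow x\Leftrightarrow x\mathbf{d}\leq(x_\lambda)\mathbf{d}$ and $x_\lambda\arrowc x\Leftrightarrow\mathbf{d}x\leq\mathbf{d}(x_\lambda)$, so $x_\lambda\carrowc x$ is precisely the conjunction of the two inequalities $x\mathbf{d}\leq(x_\lambda)\mathbf{d}$ and $\mathbf{d}x\leq\mathbf{d}(x_\lambda)$.

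The common engine, which I would isolate first, is the following. Suppose $Y\subseteq X$ satisfies $Y\mathbf{d}=(x_\lambda)\mathbf{d}$ and $\mathbf{d}Y=\mathbf{d}(x_\lambda)$ and admits a $\mathbf{d}$-supremum $x$. Then $x\mathbf{d}=Y\mathbf{d}=(x_\lambda)\mathbf{d}$ gives $x_\lambda\carrow x$ at once, while $Y\leq^\mathbf{d}x$ together with \eqref{tri} yields $\mathbf{d}(c,x)\leq\mathbf{d}(c,y)$ for every $y\in Y$ and $c\in X$, so $\mathbf{d}x\leq\mathbf{d}Y=\mathbf{d}(x_\lambda)$ and hence $x_\lambda\arrowc x$. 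Combining, $x_\lambda\carrowc x$, so $x$ is the required $X^\circ_\circ$-limit.

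With this observation in hand, cases \eqref{Yc1a}, \eqref{Yc2a} and \eqref{Yc2b} are immediate. For \eqref{Yc1a}, \autoref{Cauchytodirected} (whose hypothesis $\mathbf{d}^\bullet\precapprox\mathbf{I}$ we are handed) supplies a $\leq^\mathbf{d}$-directed $Y$ with $Y\mathbf{d}=(x_\lambda)\mathbf{d}$ and $\mathbf{d}Y=\mathbf{d}(x_\lambda)$; since $X$ is $\leq^\mathbf{d}$-$\mathbf{d}$-complete, $Y$ has a $\mathbf{d}$-supremum and the observation finishes the job. For \eqref{Yc2a} and \eqref{Yc2b} I feed $\mathbf{e}$-completeness and $\mathbf{e}\circ\Phi^\mathbf{d}\precapprox\mathbf{d}\precapprox\mathbf{e}=\mathbf{e}^\mathrm{op}$ into \autoref{ded}, obtaining via \eqref{dedeq} a $\mathbf{d}$-directed $Y$ (respectively a $\leq^\mathbf{d}$-directed $Y$, using $\mathbf{e}$-separability) with the same two equalities; $\mathbf{d}$-$\mathbf{d}$-completeness (respectively $\leq^\mathbf{d}$-$\mathbf{d}$-completeness) then produces the $\mathbf{d}$-supremum. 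One bookkeeping point: \autoref{ded} passes to a subnet, but $(x_\lambda)\mathbf{d}$ and $\mathbf{d}(x_\lambda)$ are subnet-invariant for pre-Cauchy nets by \autoref{Clim}, and an $X^\circ_\circ$-limit of a subnet lifts to the whole net by applying \autoref{preCauchyconvergence} to $X^\circ$ and $X_\circ$ separately.

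Case \eqref{Yc1b} is the one requiring a genuinely different closing step, and I expect it to be the main obstacle. Here \autoref{CDS} (fed the hypotheses $\leq^\mathbf{d}$-$\mathbf{d}$-completeness and $\mathbf{d}_\mathbf{F}\leq\mathbf{I}$) hands us not a directed set but a $\mathbf{d}^\vee$-Cauchy sequence $(y_n)$ with $(x_\lambda)\mathbf{d}=(y_n)\mathbf{d}$ and $\lim_{\lambda,n}\mathbf{d}(x_\lambda,y_n)=0$. As $\mathbf{d}^\vee$ is symmetric its $X^\circ_\circ$-limits are honest (pseudo)metric limits, so $\mathbf{d}^\vee$-completeness yields $y\in X$ with $\mathbf{d}^\vee(y_n,y)\rightarrow0$. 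The delicate part is deducing $x_\lambda\carrowc y$: from $\mathbf{d}(y,y_n)\rightarrow0$ and $(x_\lambda)\mathbf{d}=(y_n)\mathbf{d}$ one gets $y\mathbf{d}\leq(x_\lambda)\mathbf{d}$, hence $x_\lambda\carrow y$; and choosing $n$ large enough that both $\mathbf{d}(y_n,y)$ and the eventual value of $\mathbf{d}(x_\lambda,y_n)$ are small forces $\mathbf{d}(x_\lambda,y)\rightarrow0$, whence $x_\lambda\arrowc y$ by \eqref{arrowc} (using that $(x_\lambda)$, being $\mathbf{d}$-Cauchy, is $\mathbf{d}$-reflexive). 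The only real care lies in unwinding the joint limit $\lim_{\lambda,n}$ correctly; everything else is triangle-inequality bookkeeping.
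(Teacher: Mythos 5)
Your proposal is correct and takes essentially the same route as the paper: each case is dispatched to \autoref{Cauchytodirected}, \autoref{CDS} or \autoref{ded}, and the relevant completeness hypothesis converts the resulting directed set (or $\mathbf{d}^\vee\!$-Cauchy sequence) into the required $X^\circ_\circ$-limit. The only cosmetic difference is in the closing verification: you check the two hole-convergences directly from both equalities $Y\mathbf{d}=(x_\lambda)\mathbf{d}$ and $\mathbf{d}Y=\mathbf{d}(x_\lambda)$ (and, in case \eqref{Yc1b}, from the joint limit $\lim_{\lambda,n}\mathbf{d}(x_\lambda,y_n)=0$), whereas the paper gets by with $Y\mathbf{d}=(x_\lambda)\mathbf{d}$ alone by routing through $x_\lambda\carrowb x\leq^\mathbf{d}x$ and \eqref{carrowc}.
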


\begin{proof}  If $\mathbf{d}^\bullet\,\precapprox\mathbf{I}$ then, for any $\mathbf{d}$-Cauchy $(x_\lambda)$, we have $Y\subseteq X$ with $Y\mathbf{d}=(x_\lambda)\mathbf{d}$, by \autoref{Cauchytodirected}.  If $X$ is also $\leq^\mathbf{d}$-$\mathbf{d}$-complete then we have $x=\mathbf{d}$-$\sup Y$ and hence $x\mathbf{d}=Y\mathbf{d}=(x_\lambda)\mathbf{d}$ so $x_\lambda\carrowb x\leq^\mathbf{d}x$, i.e. $x_\lambda\carrowc x$, by \eqref{carrowc}.  This proves \eqref{Yc1a} and likewise \eqref{Yc1b} follows from \autoref{CDS}, while \eqref{Yc2a} and \eqref{Yc2b} follow from \autoref{ded}.
\end{proof}

Note in \autoref{Yc} \eqref{Yc1b}, if $\mathbf{d}$ is a hemimetric then we can replace $\mathbf{d}_\mathbf{F}$ with $\mathbf{d}_\Phi$ for a formally weaker assumption (even weaker if we consider \eqref{weakcond}), by \autoref{dFdF}.

For a simple application of \autoref{Yc} \eqref{Yc1a}, we consider the space of `generalized formal balls' of $X$.  Specifically, identify $X$ with $X\times\{0\}$ and extend $\mathbf{d}$ to $X\times\mathbb{R}$ by
\[\mathbf{d}((x,r),(y,s))=(\mathbf{d}(x,y)+r-s)_+.\]
For any $x,y\in X$, $r,s\in\mathbb{R}$ and $t\in[0,\infty)$,
\begin{align*}
\mathbf{d}(x,y)+r-s\leq t\quad&\Leftrightarrow\quad\mathbf{d}((x,r),(y,s))\leq t.\\
\Leftrightarrow\quad\mathbf{d}(x,y)+r-t-s\leq0\quad&\Leftrightarrow\quad(x,r-t)\leq^\mathbf{d}(y,s).\\
\Leftrightarrow\quad\mathbf{d}(x,y)+r-(t+s)\leq0\quad&\Leftrightarrow\quad(x,r)\leq^\mathbf{d}(y,t+s).
\end{align*}
So finite radius closed upper balls have minimums and likewise for lower balls, i.e.
\[\overline{(x,r)}^\bullet_t\ =\ (x,r-t)\!\leq^\mathbf{d}\qquad\text{and}\qquad\leq^\mathbf{d}\!(y,t+s)\ =\ \overline{(y,s)}_\bullet^t.\]
Thus $\mathbf{d}^\bullet\leq\mathbf{I}$ and $\mathbf{d}_\bullet\leq\mathbf{I}$.  And $\mathbf{d}^\bullet\leq\mathbf{I}$ still applies to $X\times\mathbb{R}_-$, where $\mathbb{R}_-=(-\infty,0]$.

\begin{thm}[\cite{KostanekWaszkiewicz2011} Theorem 7.1]\label{KW}
The following are equivalent.
\begin{enumerate}
\item\label{KW1} $X$ is $\mathbf{d}$-complete.
\item\label{KW2} $X\times\mathbb{R}_-$ is $\mathbf{d}$-complete.
\item\label{KW3} $X\times\mathbb{R}_-$ is $\leq^\mathbf{d}$-complete.
\end{enumerate}
\end{thm}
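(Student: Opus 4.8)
The plan is to work throughout in the ambient space $B=X\times\mathbb{R}_-$, using the two facts already recorded: $\mathbf{d}^\bullet\leq\mathbf{I}$ on $B$, and the isometric inclusion $X=X\times\{0\}\subseteq B$ coming from $\mathbf{d}((x,0),(y,0))=\mathbf{d}(x,y)$. I would prove the equivalences \eqref{KW2}$\Leftrightarrow$\eqref{KW3} and \eqref{KW1}$\Leftrightarrow$\eqref{KW2} separately, the first being formal and the second carrying all the content. Here I read ``$\leq^\mathbf{d}$-complete'' as $\leq^\mathbf{d}$-$\mathbf{d}$-complete, which is exactly the hypothesis fed into \autoref{Yc}\eqref{Yc1a}.

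For \eqref{KW3}$\Rightarrow$\eqref{KW2} there is nothing to do beyond citing \autoref{Yc}\eqref{Yc1a}: since $\mathbf{d}^\bullet\leq\mathbf{I}$ (hence $\mathbf{d}^\bullet\precapprox\mathbf{I}$) on $B$, $\leq^\mathbf{d}$-$\mathbf{d}$-completeness of $B$ upgrades to $\mathbf{d}$-completeness of $B$. For the converse \eqref{KW2}$\Rightarrow$\eqref{KW3}, I would invoke the corollary that $\mathbf{d}$-completeness implies $\mathbf{d}$-$\mathbf{d}$-completeness, and then note that every $\leq^\mathbf{d}$-directed set is a fortiori $\mathbf{d}$-directed (if $F\leq^\mathbf{d}y$ then $F\mathbf{d}(y)=0$), so the $\mathbf{d}$-supremum supplied by $\mathbf{d}$-$\mathbf{d}$-completeness witnesses $\leq^\mathbf{d}$-$\mathbf{d}$-completeness.

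The heart of the matter is \eqref{KW1}$\Leftrightarrow$\eqref{KW2}, where the asymmetric coordinate must be controlled. For \eqref{KW1}$\Rightarrow$\eqref{KW2}, take a $\mathbf{d}$-Cauchy net $((x_\lambda,r_\lambda))\subseteq B$. The defining condition $(\mathbf{d}(x_\gamma,x_\delta)+r_\gamma-r_\delta)_+\to0$ forces $r_\delta>r_\gamma-\epsilon$ for $\gamma\prec\delta$ large; being eventually increasing up to arbitrarily small error and bounded above by $0$, the net $(r_\lambda)$ converges to some $r\in(-\infty,0]$. The same condition then gives $\mathbf{d}(x_\gamma,x_\delta)\leq(r_\delta-r_\gamma)+\epsilon\to\epsilon$, so $(x_\lambda)$ is $\mathbf{d}$-Cauchy in $X$ and, by \eqref{KW1}, $x_\lambda\carrowc x$ for some $x\in X$. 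Computing $\mathbf{d}((x_\lambda,r_\lambda),(y,s))=(\mathbf{d}(x_\lambda,y)+r_\lambda-s)_+$ and $\mathbf{d}((y,s),(x_\lambda,r_\lambda))=(\mathbf{d}(y,x_\lambda)+s-r_\lambda)_+$, and using $\liminf\mathbf{d}(x_\lambda,y)\geq\mathbf{d}(x,y)$, $\liminf\mathbf{d}(y,x_\lambda)\geq\mathbf{d}(y,x)$ together with $r_\lambda\to r$, yields $(x_\lambda,r_\lambda)\carrowc(x,r)$ with $(x,r)\in B$. Conversely, for \eqref{KW2}$\Rightarrow$\eqref{KW1}, lift a $\mathbf{d}$-Cauchy $(x_\lambda)\subseteq X$ to $(x_\lambda,0)\subseteq B$ (still $\mathbf{d}$-Cauchy, with the same distances) and let $(x,r)$ be its $X^\circ_\circ$-limit. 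As the net is $\mathbf{d}$-reflexive, the $\arrowc$ part together with \eqref{arrowc} gives $\mathbf{d}((x_\lambda,0),(x,r))=(\mathbf{d}(x_\lambda,x)-r)_+\to0$; since $-r\geq0$ this forces $r=0$ and $\mathbf{d}(x_\lambda,x)\to0$. With $(x,r)=x\in X\times\{0\}$, restricting the hole-limit inequalities to test points $(y,0)$, $y\in X$, recovers $x_\lambda\carrowc x$ in $X$.

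The main obstacle is the bookkeeping around the extra coordinate, and it is genuinely two-sided. In \eqref{KW1}$\Rightarrow$\eqref{KW2} the Cauchy condition only bounds a positive part, so the monotonicity of $(r_\lambda)$, and hence its convergence and finiteness, must be teased out by hand; boundedness above by $0$ is what keeps the limiting radius in $\mathbb{R}_-$ so that $(x,r)\in B$. In \eqref{KW2}$\Rightarrow$\eqref{KW1} the same boundedness is what forces $r=0$, which is essential since only then does the limit lie in the subspace $X=X\times\{0\}$ and descend to a limit there. I would also flag that reading ``$\leq^\mathbf{d}$-complete'' as order-theoretic directed completeness, rather than $\leq^\mathbf{d}$-$\mathbf{d}$-completeness, would require the extra step that $\leq^\mathbf{d}$-suprema in $B$ are $\mathbf{d}$-suprema; since \autoref{Xcompdirected} needs $\mathbf{d}_\bullet\leq\mathbf{I}$, which is unavailable on $X\times\mathbb{R}_-$, the $\leq^\mathbf{d}$-$\mathbf{d}$ reading aligned with \autoref{Yc}\eqref{Yc1a} is the one that makes the argument go through cleanly.
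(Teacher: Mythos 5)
Your handling of \eqref{KW1}$\Leftrightarrow$\eqref{KW2} is correct and essentially identical to the paper's (you are in fact more careful than the paper on \eqref{KW2}$\Rightarrow$\eqref{KW1}, where you verify that the limiting radius must be $0$ so the limit really lies in $X\times\{0\}$). The genuine gap is your reading of \eqref{KW3}. In the paper, ``$\leq^\mathbf{d}$-complete'' means \autoref{YC} applied to the distance $\leq^\mathbf{d}$ (an order identified with its characteristic function), which, as explained in the preliminaries, is precisely order-theoretic directed completeness: every $\leq^\mathbf{d}$-directed subset of $X\times\mathbb{R}_-$ has a $\leq^\mathbf{d}$-supremum. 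That purely order-theoretic reading is the entire point of the theorem \textendash\, it is the Kostanek--Waszkiewicz statement (\cite{KostanekWaszkiewicz2011} Theorem 7.1) that $X$ is Yoneda complete iff its poset of formal balls is directed complete. You instead read \eqref{KW3} as $\leq^\mathbf{d}$-$\mathbf{d}$-completeness, i.e. you assume $\leq^\mathbf{d}$-directed sets have $\mathbf{d}$-suprema, a strictly stronger hypothesis: the paper shows with the example $\mathbf{d}(r,s)=(r-s)_+$ on $[0,1)\cup\{2\}$ (a distance of exactly this formal-ball type) that $\leq^\mathbf{d}$-suprema need not be $\mathbf{d}$-suprema. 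Consequently your \eqref{KW3}$\Rightarrow$\eqref{KW2} proves a weaker implication than the one stated, and the crux of the theorem is missing. (Your \eqref{KW2}$\Rightarrow$\eqref{KW3} survives the correct reading, since the $\mathbf{d}$-suprema it produces are in particular $\leq^\mathbf{d}$-suprema.)

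The missing step is exactly the obstacle you flagged and then sidestepped: $\mathbf{d}_\bullet\leq\mathbf{I}$ fails on $X\times\mathbb{R}_-$, but it \emph{does} hold on $X\times\mathbb{R}$, and the paper exploits this by proving that any $\leq^\mathbf{d}$-supremum $(x,r)$ of a $\leq^\mathbf{d}$-directed family $(x_\lambda,r_\lambda)\subseteq X\times\mathbb{R}_-$ remains a $\leq^\mathbf{d}$-supremum in the larger space $X\times\mathbb{R}$. Given an upper bound $(y,s)\in X\times\mathbb{R}$, one passes to the shifted family $(x_\lambda,r_\lambda-s)$, which lies in $X\times\mathbb{R}_-$ because $\mathbf{d}(x_\lambda,y)+r_\lambda-s\leq0$ forces $r_\lambda-s\leq0$; directed completeness of $X\times\mathbb{R}_-$ gives a supremum $(z,t)$ of the shifted family, which is identified (up to $\leq^\mathbf{d}$-equivalence) with $(x,r-s)$ via the order-isomorphic shift; then $(x_\lambda,r_\lambda-s)\leq^\mathbf{d}(y,0)$ yields $(x,r-s)\leq^\mathbf{d}(y,0)$, i.e. $(x,r)\leq^\mathbf{d}(y,s)$. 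Now \autoref{Xcompdirected}, applied in $X\times\mathbb{R}$ where $\mathbf{d}_\bullet\leq\mathbf{I}$ holds, upgrades $(x,r)$ to a $\mathbf{d}$-supremum, so $X\times\mathbb{R}_-$ is $\leq^\mathbf{d}$-$\mathbf{d}$-complete after all, and only at that point does \autoref{Yc}\eqref{Yc1a} apply as in your argument. So the correct fix is not to strengthen the hypothesis but to insert this shift argument; without it, what you have proved is genuinely weaker than the stated theorem.
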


\begin{proof}\
\begin{itemize}
\item[\eqref{KW1}$\Rightarrow$\eqref{KW2}] If $(x_\lambda,r_\lambda)$ is $\mathbf{d}$-Cauchy then, as $(r-s)_+\leq\mathbf{d}((x,r),(y,s))$ and $\mathbb{R}_-$ is bounded above by $0$, $(r_\lambda)$ must be Cauchy (for the usual metric on $\mathbb{R}$).  Thus $r_\lambda\rightarrow r$ for some $r\in\mathbb{R}_-$, and hence $(x_\lambda)$ is $\mathbf{d}$-Cauchy.  Thus $x_\lambda\carrowc x$, for some $x\in X$, and hence $(x_\lambda,r_\lambda)\carrowc(x,r)$ in $X\times\mathbb{R}_-$.
\item[\eqref{KW2}$\Rightarrow$\eqref{KW1}] Identify $X$ with $X\times\{0\}$.
\item[\eqref{KW2}$\Rightarrow$\eqref{KW3}] Immediate.
\item[\eqref{KW3}$\Rightarrow$\eqref{KW2}]  We claim that any $\leq^\mathbf{d}$-supremum $(x,r)$ of $\leq^\mathbf{d}$-directed $(x_\lambda,r_\lambda)$ in $X\times\mathbb{R}_-$ remains a $\leq^\mathbf{d}$-supremum in $X\times\mathbb{R}$.  Indeed, say $(x_\lambda,r_\lambda)\leq^\mathbf{d}(y,s)\in X\times\mathbb{R}$, for all $\lambda$.  As $X\times\mathbb{R}_-$ is $\leq^\mathbf{d}$-complete, we have $(z,t)=\mathbin{\leq^\mathbf{d}}$-$\sup(x_\lambda,r_\lambda-s)$ in $X\times\mathbb{R}_-$, so $(z,t+s)=\mathbin{\leq^\mathbf{d}}$-$\sup(x_\lambda,r_\lambda)=(x,r)$ and hence $(x,r-s)=\mathbin{\leq^\mathbf{d}}$-$\sup(x_\lambda,r_\lambda-s)$.  Also $(x_\lambda,r_\lambda-s)\leq^\mathbf{d}(y,0)$, for all $\lambda$, so $(x,r-s)\leq^\mathbf{d}(y,0)$ and hence $(x,r)\leq^\mathbf{d}(y,s)$, proving the claim.  Thus $(x,r)=\mathbf{d}$-$\sup(x_\lambda,r_\lambda)$ in $X\times\mathbb{R}$, by \autoref{Xcompdirected}, and hence in $X\times\mathbb{R}_-$.  This shows that $X\times\mathbb{R}_-$ is $\leq^\mathbf{d}$-$\mathbf{d}$-complete and hence $\mathbf{d}$-complete, by \autoref{Yc} \eqref{Yc1a}.\qedhere
\end{itemize}
\end{proof}

\bibliography{maths}{}
\bibliographystyle{alphaurl}

\end{document}